\documentclass[a4paper]{amsart}
\usepackage[utf8]{inputenc}
\usepackage{amscd,amsthm,amsfonts,amssymb,esint}
\usepackage{amsmath}
\usepackage[colorlinks=true]{hyperref}
\usepackage{fullpage}
\usepackage[all]{xy}
\usepackage{mathdots}

\usepackage{mathtools}
\usepackage{mathrsfs}
\usepackage[T1]{fontenc}
\usepackage{lmodern}

    \newcommand{\BC}{{\mathbb {C}}}

    \newcommand{\BQ}{{\mathbb {Q}}} \newcommand{\BR}{{\mathbb {R}}}

     \newcommand{\BZ}{{\mathbb {Z}}}

    \newcommand{\CA}{{\mathcal {A}}} \newcommand{\CB}{{\mathcal {B}}}
     \renewcommand{\CD}{{\mathcal {D}}}
    \newcommand{\CE}{{\mathcal {E}}} 
    \newcommand{\CG}{{\mathcal {G}}}

     \newcommand{\CN}{{\mathcal {N}}}
    \newcommand{\CO}{{\mathcal {O}}} 
     \newcommand{\CR}{{\mathcal {R}}}
    \newcommand{\CS}{{\mathcal {S}}}

     \newcommand{\fp}{{\mathfrak{p}}}

    \newcommand{\Ad}{{\mathrm{Ad}}}
    \newcommand{\ad}{{\mathrm{ad}}}

    \newcommand{\coker}{{\mathrm{coker}}}

    \newcommand{\der}{{\mathrm{der}}}

     \newcommand{\GL}{{\mathrm{GL}}}
    
    \newcommand{\Hom}{{\mathrm{Hom}}}

    \renewcommand{\Re}{{\mathrm{Re}}}

    \newcommand{\SL}{{\mathrm{SL}}}

    \newcommand{\tr}{{\mathrm{tr}}}

    \newcommand{\Vol}{{\mathrm{Vol}}}

    \newcommand{\wt}{\widetilde}
    \newcommand{\wh}{\widehat}

    \newcommand{\ov}{\overline}

    \newcommand{\sk}{\medskip}
    \newcommand{\lra}{\longrightarrow}
    \newcommand{\ra}{\rightarrow} 
    \newcommand{\bs}{\backslash}
    
    \newcommand{\s}{\sk\noindent}

    \theoremstyle{plain}
    \newtheorem{thm}{Theorem}[section] \newtheorem{cor}[thm]{Corollary}
    \newtheorem{lem}[thm]{Lemma}  \newtheorem{prop}[thm]{Proposition}
     \newtheorem{defn}[thm]{Definition}

\theoremstyle{remark} \newtheorem{remark}[thm]{Remark}
\theoremstyle{remark} 
\theoremstyle{remark} 

    \numberwithin{equation}{section}

     \renewcommand{\Ad}{ {\mathrm{Ad}}}
     
     \newcommand{\Kl}{{\mathrm{Kl}}}
     \newcommand{\Pl}{{\mathrm{Pl}}}

\begin{document}

\title{Kloosterman Sums and Pl\"ucker Height Zeta Functions  }

\author{Li Cai} 
\address[L.C.]{Academy for Multidisciplinary Studies, Beijing National Center for Applied Mathematics, Capital Normal University, Beijing, 100048, People's Republic of China
} 
\email{caili@cnu.edu.cn}

\author{Jingsong Chai} 
\address[J.C.]{School of Mathematics, Physics and Finance, Anhui Polytechnic University, Wuhu, Anhui, 241000, People's Republic of China
} 
\email{jingsongchai@hotmail.com}

\author{Yadi Liu} 
\address[Y.L.]{Academy for Multidisciplinary Studies, Beijing National Center for Applied Mathematics, Capital Normal University, Beijing, 100048, People's Republic of China
} 
\email{2220502124@cnu.edu.cn}

\author{Mingyu Yang}
\address[M.Y.]{School of Mathematics, Nanjing University, Jiangsu, 210093, People's Republic of China}
\email{1010850347@qq.com}

\begin{abstract}
We establish nontrivial bounds for Kloosterman sums over $p$-adic fields associated with the longest Weyl element on arbitrary split reductive groups, at arbitrary level. As an application, we obtain the regularity of Bessel distributions (of Whittaker models) for all such groups. 
\end{abstract}

\subjclass[2020]{Primary:11L05, 22E50; Secondary: 22E35, 11F70}         

\keywords{
Whittaker models, Bessel distributions, Kloosterman sums}

\maketitle
\tableofcontents

\section{Introduction}

%In this paper, we consider the non-trivial bound of Kloosterman sums on split groups. It is a continuation of \cite{CCL26} and \cite{CLYY26}.

%Bounding Kloosterman sums is an important problem with many applications, see for example \cite{AB24, BM24, CCL26, CLYY26, CLPS91, Fri87, Lin26, Man22, Man24, Mia24, Muj24, Sal32, Ste87, Wei48}.

Bounding Kloosterman sums is a fundamental problem in number theory, with many important applications. 

In the classical $\mathrm{GL}_2$ case, Salié \cite{Sal32} obtained early estimates. Weil \cite{Wei48} established the celebrated square-root bound. Such estimates played an important role in Selberg's work on the spectral theory of automorphic forms.

The study of Kloosterman sums beyond $\mathrm{GL}_2$ was pioneered by Bump, Friedberg and Goldfeld \cite{BFG88}. Subsequently, Friedberg \cite{Fri87} and Stevens \cite{Ste87} simultaneously and independently developed the theory for general $\GL_n$. Related developments for orthogonal groups were carried out by Cogdell, Li, Piatetski-Shapiro and Sarnak \cite{CLPS91}. More recently, significant progress has been made for particular families of groups by Assing and Blomer \cite{AB24}, Blomer and Man \cite{BM24}, Man \cite{Man22,Man24}, Linn \cite{Lin26},  Mujdei \cite{Muj24}, Huang (in the appendix of \cite{GSW21}) and  Miao \cite{Mia24}.

A uniform theory of Kloosterman sums for general split groups was initiated by D\k{a}browski \cite{Dab93} and further developed by D\k{a}browski and Reeder \cite{DR98}. In particular, the latter obtained an explicit formula for the generating function of the cardinalities of Kloosterman sets, yielding general trivial bounds. Our recent works \cite{CCL26,CLYY26} further investigate Kloosterman sums on split groups and their connections with Bessel distributions.

In this paper, we establish nontrivial bounds for Kloosterman sums associated with the longest Weyl element on arbitrary split reductive groups at any level.

Our approach to obtain the power-saving counting estimates is through a representation-theoretic analysis of Pl\"ucker height zeta functions, using the Casselman-Shalika formula for spherical Whittaker functions and the Weyl dimension formula. This avoids explicit calculations of Pl\"ucker relations for individual groups.

Let $G$ be a split reductive group over a $p$-adic field $F$ with derived subgroup $G^\der$. Let
$B = AN$ be a Borel subgroup of $G$ with $A$ a maximal torus of $G$ and $N$ the unipotent radical of $B$. Denote by $\delta$ the modulus character of $B$. We choose a representative $\dot{w_0} \in N_G(A)$ of the longest Weyl element. Let $\psi_0$ be the generic character on $N$ given as in Section \ref{sec-notations}.

For an open compact subgroup $K$ of $G$ such that $\psi_0|_{N \cap K} = 1$, we consider the following Kloosterman sum 
\[\Kl(t) =\Kl(t;\psi_0,\dot{w_0},K) = \sum_{x \in X_K(\dot{w_0}t)} \psi_0(u(x))\psi_0(u'(x)), \quad t \in A.\]
Here, the Kloosterman set is
\[X_K(\dot{w_0}t) = (N\cap K)\bs \left(N\dot{w_0}t N \cap K \right)/(N\cap K)\]
together with  two maps 
\[u: X_K(\dot{w_0}t) \ra (N\cap K)\bs N, \quad u': X_K(\dot{w_0}t) \ra N/(N\cap K).\]
In the above, if $[n_1\dot{w_0}tn_2] \in X_K(\dot{w_0}t)$ with $n_1, n_2 \in N$ and $t \in A$, then 
\[u([n_1\dot{w_0}tn_2]) = [n_1], \quad u'([n_1\dot{w_0}tn_2]) = [n_2].\]
	
We say that the Kloosterman sum $\mathrm{Kl}$ has a nontrivial bound (\cite[Definition 3.17]{CCL26}) if there exist constants $\varepsilon>0$ and $C>0$ (possibly depending on $p$) such that
\[\left|\mathrm{Kl}(t)\right|\delta^{1/2-\varepsilon}(t) \leq C, \quad t \in A \cap G^\der.\]

The following is the main result of this paper.

% Add the condition \psi_0|_{N\cap K}=1
\iffalse
\begin{thm}\label{Kloosterman sum-split reductive group} Let $G$ be a split reductive group over $F$. Then for every compact open subgroup $K$, the Kloosterman sum $\Kl(\cdot,\psi_0^{-1},\dot{w_0},K)$ has a nontrivial bound.
\end{thm}
\fi

\begin{thm}\label{Kloosterman sum-split reductive group}
Let $G$ be a split reductive group over $F$.
Then for every compact open subgroup $K\subset G$ such that
$\psi_0|_{N\cap K}=1$, the Kloosterman sum
$\Kl(\cdot,\psi_0^{-1},\dot{w_0},K)$ has a nontrivial bound.
\end{thm}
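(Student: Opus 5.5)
We bound the Kloosterman sum trivially, $|\Kl(t)|\le |X_K(\dot{w_0}t)|$, and then show that the cardinality of the Kloosterman set already has a power saving: there exist $\varepsilon>0$ and $C>0$ with
\[|X_K(\dot{w_0}t)|\,\delta^{1/2-\varepsilon}(t)\le C,\qquad t\in A\cap G^\der.\]
The first step is a reduction in $K$. Any open compact $K$ contains a principal congruence subgroup $K(m)$ of a hyperspecial $K_0=G(\CO)$, after conjugating by an element of $A$; a conjugation by $A$ only rescales $t$ and $\psi_0$ by bounded factors. The fibers of $X_{K'}(\dot{w_0}t)\to X_K(\dot{w_0}t)$ are controlled by indices like $[N\cap K:N\cap K']^2$. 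It therefore suffices to bound $|X_{K(m)}(\dot{w_0}t)|$. Comparing level $m$ to level $0$ costs only a factor $q^{c m}$ independent of $t$, using that $N\dot{w_0}tN\cap K(m)\subset N\dot{w_0}tN\cap K_0$ and counting cosets. So the problem reduces to the spherical case $K=K_0$, up to constants depending on $m$ and $p$.

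For $K_0$, I would encode the counts in a generating function
\[Z(s)=\sum_{t\in A/A(\CO)}|X_{K_0}(\dot{w_0}t)|\,\delta^{s}(t),\]
or better a multivariable version indexed by the fundamental weights: $Z(\underline s)=\sum_t |X(\dot{w_0}t)|\prod_i H_{\varpi_i}(t)^{-s_i}$. Here $H_{\varpi_i}$ is the Plücker height, namely the norm of the lowest-weight coordinate of $g$ acting on the highest-weight vector of the fundamental representation $V_{\varpi_i}$ of the simply connected cover. The key point is that $g=n_1\dot{w_0}tn_2\in K_0$ if and only if all matrix coefficients of $g$ in every $V_{\varpi_i}$ are integral. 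The Plücker coordinates of the $\varpi_i$-orbit then determine the Bruhat coordinates, giving an injection of $X(\dot{w_0}t)$ into tuples of primitive integral vectors of prescribed extremal coordinate $t^{w_0\varpi_i}$.

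Counting such tuples becomes the height zeta function. I would compute it via unfolding with spherical Whittaker functions: $\sum_t |X(\dot w_0 t)|\,\cdot$ is a Jacquet-type integral $\int_{N}\phi(\dot{w_0}n)\,dn$ against the characteristic function of $K_0$. By the Casselman--Shalika formula this is expressed as a sum over dominant $\lambda$ of $\dim V_\lambda$ weighted by $q^{-\langle\lambda, s\rangle}$, times a ratio of zeta factors. The Weyl dimension formula gives $\dim V_\lambda\ll \prod_\alpha\langle\lambda+\rho,\alpha^\vee\rangle$, which is polynomial in $\lambda$. This yields absolute convergence of $Z(s)$ for $\Re(s)>1/2-\varepsilon_0$, i.e.\ beyond the trivial abscissa $1/2$ from Dąbrowski--Reeder.

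A convergent Dirichlet series with nonnegative coefficients bounds each coefficient. This gives $|X(\dot{w_0}t)|\le C\,\delta^{-1/2+\varepsilon}(t)$ for $t$ in the derived torus, where $\delta$ controls every direction of $A\cap G^\der$. Combining with the reduction step proves the theorem.

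The main obstacle is the third step: showing that the multivariable series converges slightly to the left of $\delta^{1/2}$ in every direction simultaneously. Polynomial growth from the Weyl dimension formula must be beaten by exponential decay $q^{-\varepsilon\langle\lambda,2\rho\rangle}$. One must check that the Casselman--Shalika denominators, the factors $\prod_{\alpha>0}(1-q^{-1}e^{\alpha})$, introduce no poles at $\Re s=1/2$. I would first try to handle this by expanding them as geometric series and using positivity of the coefficients.
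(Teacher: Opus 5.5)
Your argument fails at its first step, and the failure is structural. You bound $|\Kl(t)|\le|X_K(\dot{w_0}t)|$ and then claim that the count itself satisfies $|X_K(\dot{w_0}t)|\,\delta^{1/2-\varepsilon}(t)\le C$. That claim is false.

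Take $G=\SL_2$, $K=\SL_2(\CO)$ (so $\psi_0|_{N\cap K}=1$), and $t=\alpha^\vee(\varpi^r)$ with $r\ge1$. Integrality of $n_1\dot{w_0}tn_2$ forces $x_\alpha(n_1)\in\varpi^{-r}\CO$ with $\varpi^r x_\alpha(n_1)$ a unit modulo $\fp^r$. It then determines $n_2$ modulo $N(\CO)$. Hence $|X_K(\dot{w_0}t)|=q^r-q^{r-1}$, while $\delta^{-1/2}(t)=q^r$, so $|X_K(\dot{w_0}t)|\,\delta^{1/2-\varepsilon}(t)=(1-q^{-1})q^{2r\varepsilon}\to\infty$.

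Equivalently, the generating function $\sum_{r\ge0}|X_K(\dot{w_0}\alpha^\vee(\varpi^r))|\,x^r=(1-x)/(1-qx)$ has a genuine pole exactly at the trivial abscissa $\delta^{1/2}$. Its coefficients are nonnegative, so nothing can make it converge beyond that point. The D\k{a}browski--Reeder trivial bound is sharp for the count.

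Your claimed computation is also misattributed. The integral $\int_N\phi(\dot{w_0}n)\,dn$ you describe carries no character, so it is an intertwining (Gindikin--Karpelevich type) integral, not a spherical Whittaker function, and Casselman--Shalika does not enter. The entire power saving for $\Kl$ comes from cancellation in the exponential sum, which you discarded at the outset.

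The missing idea is Stevens' trick. Averaging over the action of $A\cap K_d$ on the Kloosterman set factors each orbit's contribution into a product of $\GL_2$ Kloosterman sums (Theorem \ref{thm:general-stevens}). The Weil bound then gives $|\Kl_d(t)|\le C_d\sum_y|X_d(\dot{w_0}t)_y|\,q^{\frac12\sum_i y_i}$, i.e.\ square-root cancellation in the simple-root directions.

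Only after this does counting enter, and what must be counted is not $X_d(\dot{w_0}t)$ but the Pl\"ucker-constrained sets $S_r$, where the simple-root coordinates are set to $0$ (Theorem \ref{main-thm-CLYY}). For these the trivial bound is $q^R=\delta^{-1/2}(t)$. The saving comes from $Z_\Pl(s)=\int_\CD H(n)^{-s}\,dn$, taken over a single copy of $N$ with integral simple-root coordinates. Fourier expansion in those coordinates turns it into an integral of spherical Whittaker functions $J_s(c)$, where Casselman--Shalika does apply. The Weyl dimension formula then moves the abscissa from $1$ to $1-1/M_G$.

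Your level reduction is also unsound. A map $X_{K'}(\dot{w_0}t)\to X_K(\dot{w_0}t)$ with bounded fibres bounds $|X_{K'}|$ by $|X_K|$, not conversely. In any case, no such comparison is available for sums as opposed to counts. The paper reaches arbitrary $K$ through the germ expansion of Kloosterman integrals from \cite{CCL26}, which needs nontrivial bounds on every standard Levi subgroup with an admissible level group. It reaches general reductive $G$ from the simply connected simple factors via the isogeny and admissible-group lemmas of \cite{CCL26}.
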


As an application of Theorem \ref{Kloosterman sum-split reductive group}, we obtain the regularity of Bessel distributions $B_\pi$ on $G$ as follows (See \cite{CCL26} for the survey of this problem). 

Denote by $\CS(G)$ the space of Schwartz functions on $G$. Let
$\pi$ be an irreducible smooth admissible representation on $G$ with $\pi^*$ denoting its linear dual. Let $\pi^\vee$ be the contragredient 
representation of $\pi$, that is, the smooth part of $\pi^*$. The group $G$ then acts on all these spaces and these actions
induce actions of $\CS(G)$. Note that for any $\ell \in (\pi^\vee)^*$ and $f \in \CS(G)$, $\pi^\vee(f)\ell$ is smooth so that 
$\pi^\vee(f)\ell \in (\pi^\vee)^\vee = \pi$. For any $\ell_1 \in \pi^*$ and $\ell_2 \in (\pi^\vee)^*$, the relative character 
associated to $\ell_1$ and $\ell_2$ is defined as
\[ B_{\ell_1,\ell_2}(f) = \ell_1(\pi^\vee(f)\ell_2), \quad f \in \CS(G).\] When $\pi$ is generic with respect to $\psi_0$, and both $\ell_1,\ell_2$ are Whittaker functionals, the associated relative character $B_\pi=B_{\ell_1,\ell_2}$ is called Bessel distribution attached to $\pi$.

\begin{thm}\label{thm-split reductive group}
    Suppose $G$ is a split reductive group over $F$. For any irreducible admissible generic representation $\pi$ of $G$, the Bessel distribution $B_\pi$ with respect to the generic character $\psi_0$ is regular.
    \begin{proof}
        By Theorem \cite[Theorem 3.24]{CCL26}, it is enough to
        prove that for any subset $I$ of the set of simple roots $\Delta$
        for $G$, the Kloosterman sum $\Kl_{M_I}$ on the standard Levi  $M_I$ associated to $G$ with respect to
        the triple $(\psi_0|_{N_{M_I}}^{-1},\dot{w_I},K_I)$
        has a nontrivial bound. Here, $K_I$ is some admissible group of $M_I$ (\cite[Definition 2.25]{CCL26}) and $\{\dot{w_I}\}_{I\subset\Delta}$ is a system of representatives of $\{w_I\}_{I\subset\Delta}$. Applying Theorem \ref{Kloosterman sum-split reductive group} to each $M_I$, we obtain the regularity.
    \end{proof}
\end{thm}

We sketch the proof Theorem \ref{Kloosterman sum-split reductive group}. For this, we assume $G$ is semisimple and simply connected.

Stevens’ approach \cite{Ste87} is a basic method for bounding Kloosterman sums. By considering the action of $A$ on the Kloosterman set, the contribution of each orbit in the Kloosterman sum on $G$ can be decomposed into a product of Kloosterman sums on $\GL_2$. Applying the Weil bound for these Kloosterman sums on $\GL_2$, the problem  is reduced to a counting problem for the Kloosterman set. 

Pl\"ucker coordinates are generators of $F[N \bs G]$. Based on the
homogeneity of Pl\"ucker coordinates, the counting problem is reduced to
that of the Pl\"ucker-constrained sets $S_r$, $r \in \BZ_{\geq 0}^m$ with $m$ the rank of $G$, which are
subsets of affine spaces defined by conditions on Pl\"ucker coordinates
(Definition \ref{defn:plucker-constrained}). This reduction is the main 
result of \cite{CLYY26} (Theorem \ref{main-thm-CLYY}).

For each $r \in \BZ_{\geq 0}^m$ with $R = \sum_i r_i$, we have (Proposition \ref{bound for S_r})
\[|S_r| \leq \sum_{0 \leq \ell \leq R} b_\ell, \quad 
b_\ell = \#\left\{ n \in \CD/N(\CO) \Big| h(n) = \ell \right\}.\]
Here,  $\CD$ consists of elements in $N$ with integral entries for simple roots and $h$ is the Pl\"ucker height (Definition \ref{defn-pl}).

We define the Pl\"ucker height zeta function (Definition \ref{defn-zeta})
\[ Z_\Pl(s) = \sum_{\ell \geq 0} b_\ell q^{-\ell s}.\]
The series is absolutely convergent for $\Re(s)>1$. 

A simple Fourier analysis gives (Lemma \ref{lem-fourier}) 
\[Z_\Pl(s) = \int_{\CO^m} J_s(c)dc, \quad J_s(c) = \int_N f_s(\dot{w_0}n)\ov{\psi_c(n)}dn.\]
Here, $f_s$ is the normalized spherical section of certain unramified principal series on $G$. 

The Casselman-Shalika
formula expresses the spherical Whittaker function $J_s$ on $G$ in terms of 
 Weyl characters of finite-dimensional representations on the Langlands dual group $\wh{G_\ad}$ of the adjoint quotient $G_\ad$
 of $G$ (Theorem \ref{thm-cs}). We obtain that $Z_\Pl(s)$ extends holomorphically to the half-plane (Theorem \ref{holomorphic continuation of F(s)}(1))
 \[\Re(s) > 1 - \frac{1}{M_G}.\]
 Here, $M_G = \max_i a_i$ with $2\rho = \sum_{i=1}^m a_i \alpha_i$ where $2\rho$ is the sum of positive roots and $\alpha_i$, $1 \leq i \leq m$ are the simple roots.
 
 Moreover, the Weyl dimension formula gives the bounding of $Z_\Pl(s)$ for $s$ near its convergence boundary (Theorem \ref{holomorphic continuation of F(s)}(2)).   This estimate for $Z_\Pl(s)$ implies the following bound for  Pl\"ucker constrained sets $S_r$ (Theorem \ref{thm-bound-S_r})
\[ |S_r| \ll q^{\left(1-\frac{1}{2M_G}\right)R}, \quad R = \sum_i r_i.\]
 By Theorem \ref{main-thm-CLYY}, this
 gives a {\em nontrivial bound} for Kloosterman sums on $G$
 with level $K_d$, $d \gg 0$ (Corollary \ref{thm-main}). Here, $\{K_d\}_d$ is the family of
 principal congruence subgroups of $G$. 
 
 By the germ expansion
 of Kloosterman integrals (a generalization of Kloosterman sums)
 in \cite{CCL26}, together with the corresponding bounds for standard Levi subgroups, we obtain non-trivial bounds for Kloosterman sums with arbitrary level. 

It is instructive to compare our approach with that of D\k{a}browski and Reeder \cite[Section 2]{DR98}, whose method yields the {\em trivial} bound for Kloosterman sums.

\s{\bf Acknowledgements.} L. Cai is supported by the National Key R\&D Program of China No.
2023YFA1009702 and National Natural Science Foundation of
China, No. 12371012. J. Chai is supported by National Natural Science Foundation of China, No. 12571010 and No. 12671039.

\s{\bf Use of artificial intelligence.} ChatGPT was used for exploratory computations, editing, searching for references and for verification of results.

\section{Notations and conventions}\label{sec-notations}

Let $F$ be a $p$-adic field. Denote by $\CO$ the ring of integers, $\fp$ the maximal ideal of $\CO$ and $q$  the cardinality of its residue field. Denote by $|\cdot|$ the $p$-adic absolute value on $F$ with $v$ the additive valuation.  We use $\varpi$ to denote a fixed uniformizer of $F$.

Let $G$ be a split (connected) reductive group over  $F$. We shall make no distinction between an algebraic group and its $F$-points.

We fix a maximal torus $A$ of $G$ and 
a Borel subgroup $B = AN$ of $G$. Denote by $Z$ the center of $G$. Let $\Phi^+$ be its positive roots and $\Delta$ its simple roots. Put $L = |\Phi^+|$ and $m=|\Delta|$. Let $\alpha_1,\ldots,\alpha_m\in\Delta$ be the simple roots and $\omega_1,\dots,\omega_m$ be the fundamental weights. 

Denote by
\[2\rho = \sum_{\gamma \in \Phi^+} \gamma.\]
Then
\[\rho = \sum_i \omega_i.\]

Denote by $X^*(A)$ and $X_*(A)$ the groups of characters and
cocharacters on $A$. For any $a \in A$, there is a unique cocharacter $\nu(a) \in X_*(A)$ such that
\[\langle \chi, \nu(a) \rangle = v(\chi(a)), \quad \chi \in X^*(A).\]
Denote by $\delta$ the modulus character of $B$. Then
\[\delta(a) = q^{-\langle 2\rho,\nu(a) \rangle}, \quad a \in A.\]

By the existence theorem for split reductive group schemes (see \cite[Exposé XXV, Theorem 1.1 and Corollary 1.2]{SGA3}), there exists a split reductive group scheme $G_\BZ$ over $\BZ$, a split maximal torus 
$A_\BZ$ and a Borel subgroup $B_\BZ = A_\BZ N_\BZ$
,  together with an isomorphism
\[G_\BZ\otimes_{\BZ} F \stackrel{\sim}{\lra} G\]
such that under this ismorphism, $A_\BZ \otimes_\BZ F$ and 
$B_\BZ \otimes_\BZ F$ to $A$ and $B$, respectively.

We fix a Chevalley system for $G_\BZ$ (\cite[Exposé XXIII, Proposition 6.2]{SGA3})
\[
X_\gamma:\mathbb G_{a,\BZ} \stackrel{\sim}{\lra} N_{\gamma,\BZ}, \quad \gamma\in\Phi.
\]
We shall use the same notation $X_\gamma$ for their base changes to $F$.

Denote by $\CG=G_\BZ \otimes_\BZ \mathcal \CO$.
Then $\CG$ is a split reductive $\CO$-model of $G$. Denote by $\CA$, $\CB$, and $\CN$ the corresponding subgroup schemes obtained by base change from 
$A_\BZ$, $B_\BZ$ and $N_\BZ$ respectively.
In the following, all notations involving $\CO$-point refer to this model.

Let $W=N_G(A)/A$ be the Weyl group. For each simple root $\alpha_i$, let $w_i\in W$ be the corresponding simple reflection. Consider
\[
\dot w_i
=X_{\alpha_i}(1)X_{-\alpha_i}(-1)X_{\alpha_i}(1).
\]
Then 
$\dot w_i\in N_{G_{\BZ}}(A_{\BZ})(\BZ)$ 
represents $w_i$. For each $w\in W$, choose a reduced expression $w=w_{k_1}\cdots w_{k_r}$ and set
\[
\dot w=\dot w_{k_1}\cdots\dot w_{k_r}.
\]
The definition of $\dot w$ is independent on the choice of
the above reduced expression of $w$. In particular, we fix a representative $\dot w_0$ of the longest Weyl element $w_0$ 
which satisfies
\[
\dot w_0\in N_{G_{\BZ}}(A_{\BZ})(\BZ)
\subset N_G(A)(F)\cap G(\CO).
\]
We use this representative throughout the paper.

We fix an ordering $\Phi^+=\{\gamma_1,\dots,\gamma_{L}\}$ by increasing height. With respect to this ordering, every element in $N$ can be written uniquely as 
$$n(\mathbf x)=\prod_{\gamma\in\Phi^+}X_\gamma(x_\gamma),\qquad\mathbf x=(x_\gamma)_{\gamma\in\Phi^+}\in F^{\Phi^+}.$$
If $n = n(\mathbf x)$ for some $\mathbf x \in F^{\Phi^+}$,
then for any $\gamma \in \Phi^+$, we write $x_\gamma(n)$ for $x_\gamma$.

We fix a nontrivial character $\psi_F$ on $F$. We assume that $\psi_F|_{\CO} = 1$, and that $\psi_F|_{\varpi^{-1}\CO}\neq 1$. For each $c = (c_i)_i \in F^m$, denote by
\[\psi_c(n) = \psi_F\left( \sum_i c_ix_{\alpha_i}(n) \right).\]
We shall write $\psi_0 = \psi_c$ if $c_i = 1$ for all $i$.

\section{The Pl\"ucker height}

In this section, we assume $G$ is semisimple and simply connected.  For each $i=1,\dots,m$, let $(\pi_i,V_i)$ be an irreducible (algebraic) representation of $G$ with highest weight $\omega_i \in X^*(A)$.

\begin{lem}\label{lem:int}
    There exists a $G(\CO)$-stable lattice $\Lambda_i$ in $V_i$ with a $\CO$-basis
    $\{e_{ij}\}_j$ of $\Lambda_i$ such that $e_{ij}$ are weight vectors and the
    basis containing a generator $e_i^+$ of the highest-weight space.
\end{lem}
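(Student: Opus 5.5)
The plan is to build $\Lambda_i$ as an admissible $\BZ$-form of $V_i$ in the sense of Chevalley–Kostant, starting from the split group scheme $G_\BZ$ and the Chevalley system $\{X_\gamma\}$ fixed in Section \ref{sec-notations}, and then base change to $\CO$. Since $G$ is semisimple and simply connected, $G_\BZ$ is the simply connected Chevalley group scheme. Let $\mathfrak g_\BZ$ be its Chevalley lattice spanned by the $H_\alpha$ and by $E_\gamma = dX_\gamma(1)$, and let $U_\BZ$ be Kostant's $\BZ$-form of the enveloping algebra, generated by the divided powers $E_\gamma^k/k!$ for $\gamma\in\Phi$. Choose a highest weight vector $v_i^+$ of the irreducible representation $V_{i,\BQ}$ of highest weight $\omega_i$, defined over $\BQ$, and set $\Lambda_{i,\BZ} = U_\BZ\cdot v_i^+$.

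The key facts, following Steinberg's \emph{Lectures on Chevalley groups} (Theorem 1 and Corollary 1 of \S2, and \S3), are the following.
\begin{enumerate}
\item $\Lambda_{i,\BZ}$ is a lattice in $V_{i,\BQ}$ that is stable under $U_\BZ$.
\item $\Lambda_{i,\BZ}$ is the direct sum of its intersections with the weight spaces, so it has a $\BZ$-basis of weight vectors.
\item The weight-$\omega_i$ part of $\Lambda_{i,\BZ}$ is $\BZ v_i^+$, because that weight space is one-dimensional and $U_\BZ^-\cdot v_i^+$ meets it only in $\BZ v_i^+$.
\end{enumerate}
Stability under $U_\BZ$ means that the divided powers act integrally, so $X_\gamma(t)=\sum_k t^k E_\gamma^k/k!$ preserves $\Lambda_{i,\BZ}\otimes R$ for every ring $R$. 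This yields a representation of the Chevalley group scheme on $\Lambda_{i,\BZ}$. Its generic fiber is the representation of $G_\BQ$ with highest weight $\omega_i$, because $\omega_i$ lies in the weight lattice of the simply connected group.

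Now set $\Lambda_i=\Lambda_{i,\BZ}\otimes_\BZ\CO\subset V_i$, and let $e_{ij}$ be the images of the weight basis from (2), with $e_i^+ = v_i^+$. What remains is to show that all of $G(\CO)=\CG(\CO)$ stabilizes $\Lambda_i$. Because $\Lambda_{i,\BZ}$ is a representation of the group scheme $G_\BZ$, the $\CO$-points of $G_\BZ$ act on $\Lambda_{i,\BZ}\otimes\CO$, which gives stability at once. One point needs checking: the scheme obtained by this construction must be identified with the $G_\BZ$ given by the SGA3 existence theorem. This follows from the uniqueness part of that theorem for pinned split groups with the same root datum, since the Chevalley system fixes the pinning.

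The main obstacle is the integrality statement (1)–(2) for Kostant's $\BZ$-form, which is classical and which I would cite rather than reprove. A more elementary alternative, if citation of the scheme comparison is undesirable, is to note that $G(\CO)$ is generated by $A(\CO)$ and the $X_\gamma(\CO)$ for $\gamma\in\Phi$. This holds since $\CO$ is local, via the big-cell decomposition over the residue field together with Hensel's lemma. The torus acts on weight vectors by units, and each $X_\gamma(t)$ with $t\in\CO$ preserves $\Lambda_i$ by the divided-power integrality, so $\Lambda_i$ is $G(\CO)$-stable.
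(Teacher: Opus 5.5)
Your argument is correct, but it takes a different route from the paper.

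The paper never constructs a particular lattice. It takes any $\CG$-stable lattice $\Lambda_i$, whose existence is the general result for flat affine group schemes \cite[Lemma in I.10.4]{Jan03}, so $G(\CO)$-stability is automatic. The weight basis then comes from the diagonalizability of $\CA$ over $\CO$, which forces $\Lambda_i=\bigoplus_\mu\Lambda_{i,\mu}$ \cite[I.2.11]{Jan03}. Each summand is free because $\CO$ is a PID, and $\Lambda_{i,\omega_i}$ has rank one because it spans the line $V_{i,\omega_i}$.

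You instead build the explicit Chevalley--Kostant lattice $U_\BZ v_i^+$ and read off the weight decomposition and $\Lambda_{i,\omega_i}=\CO v_i^+$ from Kostant's theorem. The price is that divided-power integrality only controls $A(\CO)$ and the $X_\gamma(\CO)$, so $G(\CO)$-stability needs a separate step.

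Your first option for that step is heavier than you suggest. Identifying the Chevalley--Kostant group scheme with the SGA3 $G_\BZ$ through the uniqueness theorem presupposes that the construction yields a smooth reductive $\BZ$-group, which is Chevalley's theorem.

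Your second option is the better one, and it is elementary:
\begin{itemize}
\item $G(\CO/\fp)$ is generated by $A(\CO/\fp)$ and the root groups, all of which lift to $\CO$.
\item An element of $G(\CO)$ reducing to $1$ lies in $U^-(\CO)A(\CO)U(\CO)$, because the big cell is open and $\CO$ is local. Hensel's lemma is not even needed.
\end{itemize}

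In return you get a canonical lattice defined over $\BZ$ and independent of $F$. The subsequent arguments (Lemma \ref{lem:int-2}, Proposition \ref{prop-spherical}) use only $G(\CO)$-stability and the weight basis, so the paper's softer argument suffices there and is shorter.
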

\begin{proof}
    By \cite[Lemma in I.10.4]{Jan03}, there is a $\CG$-stable lattice $\Lambda_i$ in $V_i$ and hence is a $G(\CO)$-stable lattice of $V_i$. Consider the restriction of the action $\CG$ on $\Lambda_i$ to the split torus $\CA$. As the group $\CA$ is diagonalizable, by \cite[(3) in I.2.11]{Jan03}, $\Lambda_i$ decomposes into $A(\CO)$-eigen subspace:
    \[\Lambda_i=\bigoplus_{\mu\in X^*(\CA)}\Lambda_{i,\mu}.\]
    Here,  $X^*(\CA)=\Hom(\CA,\mathbb{G}_{m,\CO})$ and $\Lambda_{i,\mu}$ consists of $v\in\Lambda_i$ such that $\pi_i(a)v=\mu(a)v$ for all $a \in A(\CO)$.

    After extending scalars to $F$, this is the weight-space decomposition of $V_i$
    \[\Lambda_{i,\mu}\otimes_\CO F=V_{i,\mu_F},\quad \Lambda_{i,\mu}=\Lambda_i\cap V_{i,\mu_F}.\]
     Here we use the following canonical isomorphism( \cite[Fact B.1.2]{KP23})
    \[X^*(\CA)\stackrel{\sim}{\lra} X^*(A),\quad \mu\mapsto\mu_F:=\mu\times_\CO F.\]

    The highest-weight space $V_{i,\omega_i}$ is one-dimensional, so its lattice $\Lambda_{i,\omega_i}$ has rank one. Choose an $\CO$-basis in each $\Lambda_{i,\mu}$, choosing a generator $e_i^+$ for the highest-weight summand. The union of these bases is a weight basis of $V_i$ and an $\CO$-basis of $\Lambda_i$.
\end{proof}

For each $i$, denote by $\Lambda_i^* = \Hom_\CO(\Lambda_i,\CO)$ the dual lattic of $\Lambda$. A 
linear functional $\ell_i \in \Lambda_i^*$ is called primitive
if there is no $\ell' \in \Lambda_i^*$ such that
$\ell = \varpi \ell'$.

\begin{lem}\label{lem:int-2}
    For each $i$, there exists a unique primitive linear functional $\ell_i \in \Lambda_i^*$ such that
    \begin{itemize}
        \item $\ell_i \circ\pi_i(n) = \ell_i$ for any $n \in N$.
        \item $\ell_i \circ \pi_i(a) = \dot{w_0}\omega_i(a)\ell_i$
        for any $a \in A$.
        \item $\ell_i(\pi_i(\dot{w_0})e_i^+) = 1$.
    \end{itemize}
\end{lem}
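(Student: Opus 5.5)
The plan is to take $\ell_i$ to be, up to a scalar, the coordinate functional dual to the lowest weight vector. The lowest weight of $V_i$ is $w_0\omega_i$, and its weight space is one-dimensional, since it is the $\dot{w_0}$-translate of the highest weight line. Moreover $\pi_i(\dot{w_0})e_i^+$ is a weight vector of weight $w_0\omega_i$. Because $\dot{w_0}\in G(\CO)$ preserves $\Lambda_i$ and is invertible on it, this vector lies in $\Lambda_i$ and generates the rank-one lattice $\Lambda_{i,w_0\omega_i}=\Lambda_i\cap V_{i,w_0\omega_i}$. Hence, in the weight basis of Lemma \ref{lem:int}, we may (after rescaling by a unit) assume that $e_i^-:=\pi_i(\dot{w_0})e_i^+$ is one of the basis vectors $e_{ij}$.

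Let $\ell_i$ be the dual basis functional to $e_i^-$. It kills every other basis vector, so it vanishes on all weight spaces other than $V_{i,w_0\omega_i}$. The three required properties are then checked as follows.
\begin{itemize}
\item $\ell_i$ is primitive, since $\ell_i(e_i^-)=1$.
\item $\ell_i(\pi_i(\dot{w_0})e_i^+)=1$ holds by construction.
\item For $a\in A$, the operator $\pi_i(a)$ preserves weight spaces, so $\ell_i\circ\pi_i(a)=(w_0\omega_i)(a)\,\ell_i$. This is the character written $\dot{w_0}\omega_i$ in the statement.
\item For $N$-invariance, it suffices by the Chevalley system to treat each root group $X_\gamma(x)$ with $\gamma>0$. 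On a weight vector $v$ of weight $\mu$, we have $\pi_i(X_\gamma(x))v=v+\sum_{k\ge1}x^k v_k$ with $v_k$ of weight $\mu+k\gamma$. A weight $\mu+k\gamma$ with $k\geq 1$ is never the lowest weight $w_0\omega_i$, since $\mu\ge w_0\omega_i$ in the dominance order. Therefore $\ell_i(\pi_i(X_\gamma(x))v)=\ell_i(v)$.
\end{itemize}

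For uniqueness, suppose $\ell$ is any functional satisfying the first two conditions. The torus equivariance forces $\ell$ to vanish on every weight space with weight $\mu\ne w_0\omega_i$. Here we use that distinct weights give distinct characters of $A$, which holds because $A$ is a split torus over an infinite field. So $\ell$ is determined by its value on the line $V_{i,w_0\omega_i}$. The third condition pins this value down, giving $\ell=\ell_i$. Primitivity is then automatic; it is part of the statement only to normalize $\ell_i$ within $\Lambda_i^*$.

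The main obstacle is the integrality step: showing that $\pi_i(\dot{w_0})e_i^+$ is a generator of $\Lambda_{i,w_0\omega_i}$, rather than some multiple $\varpi^k$ of a generator. This follows because $\pi_i(\dot{w_0})$ is an automorphism of $\Lambda_i$ that maps weight lattices to weight lattices, by Lemma \ref{lem:int}. It therefore carries the generator $e_i^+$ of $\Lambda_{i,\omega_i}$ onto a generator of $\Lambda_{i,w_0\omega_i}$. Without this, the normalization $\ell_i(\pi_i(\dot{w_0})e_i^+)=1$ could conflict with primitivity.
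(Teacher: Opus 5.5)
Your proposal is correct and follows essentially the same route as the paper. Both take the coordinate functional dual to the lowest-weight basis vector, check $N$-invariance via the weight-raising action of $N$, and use $\dot{w_0}\in G(\CO)$ to see that $\pi_i(\dot{w_0})e_i^+$ is a unit multiple of $e_i^-$; you rescale the basis vector, whereas the paper rescales the functional by $u^{-1}$. Your explicit uniqueness argument via torus equivariance fills in what the paper simply calls clear.
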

\begin{proof}
    We give a construction of the required $\ell_i$. Consider
    the weight decomposition in Lemma \ref{lem:int}
    \[\Lambda_i = \bigoplus_\mu \Lambda_{i,\mu}.\]
    Denote by $ \lambda_i = w_0\omega_i$. Then $\lambda_i$
    is the lowest weight of $V_i$ and $\Lambda_{i,\lambda_i}
     = \CO e_i^-$ is of rank one. 

     Consider $\ell_i \in \Lambda_i^*$ such that
     \[\ell_i(e_i^-) = 1, \quad \ell_i|_{\Lambda_{i,\mu}} = 0,
     \quad \mu \not= \lambda_i.\]

     For any weight vector $v_\mu \in V_\mu$ and $n \in N$, we have
     \[\pi(n)v_\mu = v_\mu + \sum_{\nu > \mu} v_\nu.\]
     In particular, for any $\mu > \lambda_i$, 
     $\ell_i(\pi(n)v_\mu) = 0 = \ell_i(v_\mu)$ and 
     $\ell_i(\pi_i(n)v_{\lambda_i}) = \ell_i(v_{\lambda_i})$. 
     Therefore, $\ell_i$ is $N$-invariant.

     As the torus $A$ acts on $V_{\lambda_i}$ via 
     $\lambda_i = \dot{w_0}\omega_i$, $\ell_i \pi_i(a) = 
     \lambda_i(a)\ell_i$ for any $a \in A$.

     We have $\pi_i(\dot{w_0})e_i^+ \in V_{\lambda_i}$. As
     we require that $\dot{w_0} \in G(\CO)$, 
     \[\pi_i(\dot{w_0})e_i^+ = ue_i^-, \quad u \in \CO^\times\]
     so that $\ell_i(\pi_i(\dot{w_0})e_i^+)= u$. Modifying
     $\ell_i$ to $u^{-1}\ell_i$, we obtain the require 
     $\ell_i \in \Lambda_i^*$. 

     The uniqueness of such linear functional is clear.
\end{proof}

\begin{defn}[The Pl\"ucker height]\label{defn-pl} For each $i$, we
choose a $G(\CO)$-stable lattice $\Lambda_i$ of $V_i$ and 
an $\CO$-basis $\{e_{ij}\}_j$ of $\Lambda_i$ as in Lemma
\ref{lem:int}. We also choose a primitive linear functional $\ell_i \in \Lambda_i^*$ as in Lemma \ref{lem:int-2}. 
Denote by
\[p_{ij}(n) = \ell_i(\pi_i(\dot{w_0}n)e_{ij}), \quad H_i(n) = \max_j |p_{ij}(n)|, \quad
H(n) = \prod_i H_i(n).\]
\end{defn}

\begin{prop}\label{prop-spherical}
    We have the following.
    \begin{enumerate}
        \item For each $i$, $H_i(n) \geq 1$ and $H_i(nn_0) = H_i(n)$ for any $n \in N$
        and $n_0 \in N(\CO)$. In particular, for each $n \in N$, there exists a unique positive integer $h_i(n) \geq 0$  with $H_i(n) = q^{h_i(n)}$ and we denote by $h(n) = \sum_i h_i(n)$.
       \item Let $G_\ad = G/Z$ be the adjoint quotient associated to $G$. Denote by $A_\ad = A/Z$ and $B_\ad = B/Z$.
       For each $s \in \BC$, consider the induced representation on $G_\ad$
       \[I(s-1) = I_{B_\ad}^{G_\ad} \delta^{(s-1)/2} = \left\{ f:G_\ad \ra \BC \Big| f(bg) = \delta^{s/2}(b)f(g),
       \forall b \in B_\ad, g \in G_\ad\right\}.\]
       Let $f_s \in I(s-1)$ be the unique spherical vector with $f_s(1) = 1$. Then 
       \[f_s(\dot{w_0}n) = H(n)^{-s}, \quad n \in N.\]
    \end{enumerate}
\end{prop}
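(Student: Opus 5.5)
The plan is to prove both parts from one invariant description of $H_i$: the norm of a vector in $V_i^*$ relative to the dual lattice. For $g \in G$ put $\phi_i(g) = \ell_i \circ \pi_i(g) \in V_i^*$, and write $\|\phi\| = \max_j |\phi(e_{ij})|$. This is the sup norm attached to $\Lambda_i^*$, so $H_i(n) = \|\phi_i(\dot{w_0}n)\|$.

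By Lemma \ref{lem:int-2}, for $b = an' \in B$ we have
\[\phi_i(bg) = \ell_i\pi_i(a)\pi_i(n')\pi_i(g) = (w_0\omega_i)(a)\,\phi_i(g).\]
Because $\Lambda_i$ is $G(\CO)$-stable, so is $\Lambda_i^*$, and hence $\|\phi_i(gk)\| = \|\phi_i(g)\|$ for $k \in G(\CO)$. Thus $g \mapsto \|\phi_i(g)\|$ is left $(B, |w_0\omega_i|)$-equivariant and right $G(\CO)$-invariant. Right invariance under $N(\CO) \subset G(\CO)$ gives $H_i(nn_0) = H_i(n)$ at once.

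For $H_i(n) \geq 1$, I would use the Iwasawa decomposition. Write $\dot{w_0}n = b k$ with $b = a n'$ and $k \in G(\CO)$. Then
\[H_i(n) = |(w_0\omega_i)(a)|\,\|\ell_i\| = |(w_0\omega_i)(a)|,\]
using that $\ell_i$ is primitive, so $\|\ell_i\| = 1$. This is a power of $q$. Writing $\nu = \nu(a)$, we get $H_i(n) = q^{-\langle w_0\omega_i, \nu\rangle}$, so it remains to show $\langle w_0\omega_i,\nu\rangle \le 0$. Let $\lambda = -w_0\omega_i$, which is dominant; we need $\langle \lambda, \nu \rangle \geq 0$. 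The standard fact to use is that if $\dot w_0 n \in a N G(\CO)$ then $\nu(a)$ lies in the convex hull of $W\cdot\nu'$ for antidominant-type constraints; more concretely, I would argue directly with the lowest weight vector. Since $\lambda$ is the highest weight of $V_i^*$, the function $\phi_i(g)$ has its $e_i^{+}$-coordinate equal to $\ell_i(\pi_i(\dot w_0 n)e_i^+) = \ell_i(\pi_i(\dot w_0)e_i^+) = 1$. Indeed $\pi_i(n)e_i^+ = e_i^+$, since $e_i^+$ is a highest weight vector. Hence one coordinate $p_{ij}(n)$ equals $1$, giving $H_i \ge 1$ directly, without the convexity fact. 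Combined with the previous paragraph, $H_i(n) = q^{h_i(n)}$ with $h_i(n) \in \BZ_{\ge 0}$.

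For part (2), I would show that $\Phi(g) := \prod_i \|\phi_i(g)\|^{-s}$ descends to $G_\ad$ and lies in $I(s-1)$. Taking $g = \dot{w_0}$ and $g = 1$ then forces $\Phi = f_s$ up to checking $\Phi(1) = 1$. There are three checks.

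\begin{enumerate}
\item Right $G(\CO)$-invariance was shown above.
\item For left equivariance under $b = an'$, $\Phi(bg) = \prod_i |(w_0\omega_i)(a)|^{-s}\Phi(g)$. Now $\prod_i |w_0\omega_i(a)|^{-s} = |(w_0\rho)(a)|^{-s} = |\rho(a)|^{s} = q^{-s\langle\rho,\nu(a)\rangle} = \delta^{s/2}(a)$. This also shows that $\Phi$ is trivial on $Z$, so it descends, because $\delta$ is.
\item For normalization, $\phi_i(1) = \ell_i$, so $\|\phi_i(1)\| = 1$ and $\Phi(1) = 1$.
\end{enumerate}

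The one subtlety is the descent to $G_\ad$. The map $G \to G_\ad$ is not surjective on $F$-points, and $B_\ad G_\ad(\CO)$ is all of $G_\ad$. So I would define $f_s$ on $G_\ad$ by the Iwasawa decomposition, and compare on the image of $N$: $\dot{w_0}n$ lies in the image of $G$, where both functions are given by the same formula $\delta^{s/2}$ of the torus component. Uniqueness of the spherical vector then gives $f_s(\dot{w_0}n) = H(n)^{-s}$. I expect this compatibility of Iwasawa components between $G$ and $G_\ad$ to be the main point needing care, together with the identity $\sum_i \omega_i = \rho$ already recorded in Section \ref{sec-notations}.
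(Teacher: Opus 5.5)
Your proposal is correct and follows essentially the paper's argument. For part (1) you use the same observation that $p_{ij_0}(n)=\ell_i(\pi_i(\dot{w_0})e_i^+)=1$, together with $G(\CO)$-stability of $\Lambda_i$. For part (2) you rest on the same Iwasawa identity $\ell_i\circ\pi_i(uak)=(w_0\omega_i)(a)\,\ell_i\circ\pi_i(k)$, with $\max_j|\ell_i(\pi_i(k)e_{ij})|=1$ by primitivity; you phrase this as uniqueness of the spherical vector rather than as a direct evaluation. Two small remarks: the abandoned convex-hull digression should simply be deleted, and your explicit comparison on the image of $G$ in $G_\ad$ makes precise a point the paper leaves implicit.
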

\begin{remark}
    Here, the reason we prefer $G_\ad$ to $G$ is for the Fourier analysis
    of the zeta function. See Theorem \ref{holomorphic continuation of F(s)} (1).
\end{remark}

\begin{proof}[The proof of Proposition \ref{prop-spherical}]
    (1) The highest-weight vector $e_i^+$ is contained in the integral weight basis of $\Lambda_i$ chosen in Lemma \ref{lem:int}, say $e_{ij_0}=e_i^+$. The unipotent subgroup $N$ of $G$ fixes this vector. Hence for every $n\in N$,
    \[p_{ij_0}(n)=\ell_i(\pi_i(\dot{w_0}n)e_i^+)=\ell_i(\pi_i(\dot{w_0})e_i^+)=1.\]
    Since $H_i(n)=\max_j|p_{ij}(n)|$, this gives $H_i(n)\geq|p_{ij_0}(n)|=1$. In particular, we can write
    $H_i(n) = q^{h_i(n)}$ for some $h_i(n) \geq 0$.
    
    Let $n_0\in N(\CO)$. Because $\Lambda_i$ is $G(\CO)$-stable, $\pi_i(n_0)$ preserves $\Lambda_i$. Let $M$ be the matrix of $\pi_i(n_0)$ with respect to the  $\CO$-basis $\{e_{ij}\}_j$. Then the entries of $M$ are
    contained in $\CO$. For any $n \in N$, we have
    \[(p_{i1}(nn_0),\cdots,p_{id_i}(nn_0)) = (p_{i1}(n),\cdots,p_{id_i}(n)) M\]
    with $d_i = \dim V_i$. This implies that for each $j$
    \[ |p_{ij}(nn_0)| \leq H_i(n)\]
    and
    \[H_i(nn_0) \leq H_i(n).\]
    Similarly, as $\pi_i(n_0^{-1})$ preverses $\Lambda_i$,
    for any $n \in N$, we have
    \[H_i(nn_0^{-1}) \leq H_i(n).\]
    In particular, $H_i$ is right $N(\CO)$-invariant.
    
    (2) For any $n \in N$, consider the Iwasawa decomposition
    \[\dot{w_0}n = uak, \quad u \in N, a \in A, k \in G(\CO).\]
    Then for each $i,j$, 
    \[p_{ij}(n) = (w_0\omega_i)(a)\ell_i(\pi_i(k)e_{ij}).\]
    Similar argument as in the proof of (1), 
    \[\max_j |\ell_i(\pi_i(k)e_{ij})| = \max_j |\ell_i(e_{ij})|.\]
    As $\ell_i$ is primitive, 
    \[\max_j |\ell_i(e_{ij})| = 1.\]
    In particular,
    \[H_i(n) = \max_j |p_{ij}(n)| = |\dot{w_0}\omega_i(a)|, \quad
    H(n) = \prod_i H_i(n) = \prod_i |\dot{w_0}\omega_i(a)|.\]
    
    Therefore,
    \[h(n) = -\sum_i \langle \dot{w_0}\omega_i, \nu(a) \rangle = \langle \rho,\nu(a)\rangle.\]
    We obtain
    \[f_s(\dot{w_0}n) = \delta(a)^{s/2} = q^{-s\langle \rho,\nu(a) \rangle} = H(n)^{-s}.\]
\end{proof}

Consider the Pl\"ucker coordinate polynomials
\[p_{ij}(\mathbf x)=p_{ij}(n(\mathbf x)),\quad\mathbf x\in F^{\Phi^+}.\]

\begin{defn}[Pl\"ucker-constrained set]\label{defn:plucker-constrained}
We fix a section $\CR \subset F$ of $F/\CO$ containing $0$.
We also choose polynomials $q_\gamma \in F[\mathbf x]$ for
each  $\gamma \in \Phi^+$ satisfying
\[q_{\alpha_i} = x_{\alpha_i}, \quad q_\gamma = x_\gamma + f_\gamma, \quad f_\gamma \in F[x_\beta: \beta < \gamma].\]
For $r=(r_1,\dots,r_m)\in\mathbb Z_{\ge0}^m$, define the \emph{Pl\"ucker-constrained set} 
$$S_r=\left\{\mathbf x=(x_\gamma)_\gamma\in F^{\Phi^+}\ \Big|\ x_{\alpha_i}=0,\ v(p_{ij}(\mathbf x))\ge -r_i \text{ for all } i,j, \ q_\gamma(\mathbf x) \in \CR \text{ for all } \gamma \in \Phi^+ \setminus \Delta \right\}.$$
\end{defn}

The main result of \cite{CLYY26} is that the bounding of Pl\"ucker-constrained sets implies the  bounding of Kloosterman sums. See Theorem \ref{main-thm-CLYY}.

Consider the right $N(\CO)$-invariant space
\[\CD = \left\{ n \in N \Big| x_{\alpha_i}(n) \in \CO, \quad 1 \leq i \leq m \right\}.\]

\begin{lem}
    For each positive integer $\ell \geq 0$, there are only
    finitely many $n \in \CD/N(\CO)$ with the Pl\"ucker height
    $h(n) = \ell$. 
\end{lem}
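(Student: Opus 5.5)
Fix $\ell \geq 0$ and set $T_\ell = \{n \in \CD \mid h(n) = \ell\}$. By Proposition \ref{prop-spherical}(1), $T_\ell$ is right $N(\CO)$-invariant, so the claim is that $T_\ell$ meets only finitely many cosets of $N(\CO)$. The plan is to show that $T_\ell$ is contained in a compact subset of $N$. Since $N(\CO)$ is open in $N$, a compact set is covered by finitely many right cosets $nN(\CO)$, and finiteness follows at once.

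For compactness, use the Iwasawa description from the proof of Proposition \ref{prop-spherical}(2). Write $\dot{w_0}n = u a k$ with $u \in N$, $a \in A$, $k \in G(\CO)$. Then $h(n) = \langle \rho, \nu(a)\rangle$ and each $h_i(n) = -\langle w_0\omega_i, \nu(a)\rangle \geq 0$. Since $-w_0$ permutes the fundamental weights, the condition $h(n) = \ell$ gives $0 \leq \langle \omega_{i'}, \nu(a)\rangle \leq \ell$ for every $i'$. As $G$ is simply connected, the $\omega_i$ form a $\BZ$-basis of $X^*(A)$, so $\nu(a)$ ranges over a finite set and $a$ lies in a compact subset $C_\ell$ of $A$ (modulo $A(\CO)$, which can be absorbed into $k$).

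It remains to bound $n$ itself. The direct route goes through the Plücker coordinates. For $n \in T_\ell$ we have $|p_{ij}(n)| \leq q^{\ell}$ for all $i,j$, so the vectors $\pi_i(\dot{w_0}n)e_{ij}$ have bounded pairings with $\ell_i$. Equivalently, the functionals $\ell_i\circ\pi_i(\dot{w_0}n)$ lie in the bounded set $\varpi^{-\ell}\Lambda_i^*$. The map
\[N \lra \prod_i V_i^*, \qquad n \mapsto \bigl(\ell_i\circ\pi_i(\dot{w_0}n)\bigr)_i\]
is the orbit map of the lowest-weight vector $(\ell_i)_i$, equivalently of the highest-weight vector of $\prod_i V_i$ under $w_0$, restricted to the open cell. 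It is a closed immersion of $N$. Indeed, the Plücker coordinates generate $F[N\bs G]$, and $N \to N\bs G$, $n \mapsto N\dot{w_0}n$, is the closed embedding of the big cell $N\bs N\dot{w_0}N$, which is the affine open locus where the coordinates $p_{ij_0}$ with $e_{ij_0}=e_i^+$ are all equal to $1$. Hence each coordinate $x_\gamma(n)$ is a polynomial in the $p_{ij}(n)$. Bounded Plücker coordinates then force bounded $x_\gamma(n)$, so $T_\ell$ lies in a compact subset of $N$. On this approach the hypothesis $n \in \CD$ is not even needed; it only further restricts the simple-root coordinates.

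The main obstacle is justifying that the $x_\gamma$ are polynomial in the $p_{ij}$ on the big cell, that is, that $n \mapsto N\dot{w_0}n$ is a closed immersion into the affine cone cut out by $p_{ij_0}=1$. There are two ways to handle it. First, one can cite that $N \times B \to G$, $(n,b)\mapsto \dot{w_0}nb$, is an open immersion, so $N\bs N\dot{w_0}B$ is affine with coordinate ring given by localizing the Plücker algebra. Second, one can argue topologically: the map $n \mapsto (\ell_i\circ\pi_i(\dot{w_0}n))_i$ is injective and proper on $F$-points, because it is the restriction of the closed embedding $N\bs G \hookrightarrow \prod_i \mathbb{P}(V_i^*)$ to the big cell, which is closed in the affine chart $\{p_{ij_0}\neq 0\ \forall i\}$. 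Either way, bounded coordinates yield a compact preimage.
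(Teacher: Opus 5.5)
Your proof is correct, but it takes a different route from the paper. The paper postpones this lemma to Lemma \ref{lem-convergence}. There, Proposition \ref{prop-spherical}(2) gives $H(n)^{-s}=f_s(\dot{w_0}n)$, so that $\sum_{\ell\geq 0}b_\ell q^{-\ell s}=\int_{\CD}f_s(\dot{w_0}n)\,dn$ (with $\Vol(N(\CO))=1$). The absolute convergence of the intertwining integral for $\Re(s)>1$, cited from Waldspurger, then forces every $b_\ell$ to be finite.

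You instead argue geometrically. The map $n\mapsto(\ell_i\circ\pi_i(\dot{w_0}n))_i$ is a closed immersion of $N$ into the affine slice $\prod_i\{p_{ij_0}=1\}$, so each $x_\gamma$ is a polynomial in the $p_{ij}$. Since all $h_i\geq 0$, the condition $h(n)=\ell$ gives $|p_{ij}(n)|\leq q^{\ell}$. This bounds the $x_\gamma$, so the set is compact and meets only finitely many cosets of $N(\CO)$.

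Your argument is self-contained and needs no analytic input. It gives compactness of $\{n\in N: h(n)\leq\ell\}$ directly, but only a crude exponential bound on $b_\ell$. The paper's argument is shorter given the cited convergence. It also yields at the same time the convergence of $Z_\Pl(s)=\sum_\ell b_\ell q^{-\ell s}$ on $\Re(s)>1$, which is what is used later (Landau's theorem in Theorem \ref{thm-bound-S_r}). Your Iwasawa paragraph bounding $\nu(a)$ is never used and can be dropped.

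Several of your justifications are misstated, although the conclusions you draw from them are right.

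\textbf{The open immersion.} The map $(n,b)\mapsto\dot{w_0}nb$ is not an immersion; its image is $\dot{w_0}B$. The open immersion is $N\times B\to G$, $(n,b)\mapsto n\dot{w_0}b$.

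\textbf{The projective embedding.} The map $N\bs G\to\prod_i\mathbb P(V_i^*)$ is not a closed embedding, since its fibres are $A$-orbits. The closed embedding is that of the flag variety $B\bs G$, and your second argument works with it. By the Bruhat decomposition, all $p_{ij_0}$ are nonzero on $B\dot wB$ only if $w\omega_i=w_0\omega_i$ for every $i$, that is, only if $w=w_0$. Hence $B\bs G\cap\prod_i\{p_{ij_0}\neq 0\}$ is exactly the big cell $B\bs B\dot{w_0}N\cong N$. It is therefore closed in the affine chart $\prod_i\{p_{ij_0}\neq 0\}\cong\prod_i\{p_{ij_0}=1\}$.

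\textbf{The closedness claim.} $N\bs N\dot{w_0}N$ is not closed in $N\bs G$. It is closed in the open cell $N\bs N\dot{w_0}B$, where it is the locus $p_{ij_0}=1$ for all $i$. This uses $p_{ij_0}(\dot{w_0}an)=\omega_i(a)$ and the fact that the $\omega_i$ form a basis of $X^*(A)$.
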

\begin{proof}
We shall give a proof in Lemma \ref{lem-convergence}.  
\end{proof}

For each positive integer $\ell \geq 0$, denote by
\[b_\ell = \#\left\{ n \in \CD/N(\CO) \Big| h(n) = \ell \right\}.\]

\begin{prop}\label{bound for S_r}
   For any $r \in \BZ_{\geq 0}^m$, we have
   \[ |S_r|\leq \sum_{\ell=0}^R b_\ell, \quad R = \sum_i r_i.\]
\end{prop}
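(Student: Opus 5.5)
We construct an injective map $\Theta: S_r \to \CD/N(\CO)$ such that $h(\Theta(\mathbf x)) \le R$ for every $\mathbf x \in S_r$. Once this is done, $|S_r|$ is bounded by the number of classes in $\CD/N(\CO)$ of height at most $R$, which is $\sum_{\ell=0}^R b_\ell$. The natural choice is $\Theta(\mathbf x) = [n(\mathbf x)]$. For $\mathbf x\in S_r$ we have $x_{\alpha_i}(n(\mathbf x)) = 0 \in \CO$, so $n(\mathbf x)\in\CD$.

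\textbf{Step 1: the height bound.} By definition, $H_i(n(\mathbf x)) = \max_j |p_{ij}(\mathbf x)|$. The condition $v(p_{ij}(\mathbf x)) \ge -r_i$ for all $j$ gives $H_i(n(\mathbf x)) \le q^{r_i}$, hence $h_i \le r_i$. Summing over $i$ gives $h(n(\mathbf x)) \le R$. By Proposition \ref{prop-spherical}(1), $h$ is well defined on $\CD/N(\CO)$ because it is right $N(\CO)$-invariant.

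\textbf{Step 2: injectivity.} This is the main point. Suppose $\mathbf x, \mathbf y \in S_r$ and $n(\mathbf y) = n(\mathbf x) n_0$ with $n_0 \in N(\CO)$. We must show $\mathbf x=\mathbf y$, and we argue by induction along the height ordering $\gamma_1,\dots,\gamma_L$.

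\emph{Simple roots.} The map $n\mapsto (x_{\alpha_i}(n))_i$ is the abelianization $N\to N/[N,N]\cong F^m$, which is a homomorphism. Hence $x_{\alpha_i}(n_0) = x_{\alpha_i}(\mathbf y)-x_{\alpha_i}(\mathbf x) = 0$ for every $i$.

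\emph{Non-simple roots.} Fix $\gamma$ and assume $x_\beta = y_\beta$ for all $\beta<\gamma$. Using the Chevalley commutator relations with the integral Chevalley system, for the chosen product ordering we have
\[ y_\gamma = x_\gamma + (n_0)_\gamma + P_\gamma\big(x_\beta, (n_0)_\beta : \beta<\gamma\big), \]
where $P_\gamma$ has integer coefficients and every monomial contains some $(n_0)_\beta$ factor. I would prove by induction that $(n_0)_\beta=0$ modulo terms that are integral polynomials in the $x$'s. However, the $x_\beta$ need not be integral, so the discrepancy $y_\gamma-x_\gamma$ need not lie in $\CO$ in general. This is why the polynomials $q_\gamma$ appear in Definition \ref{defn:plucker-constrained}: because $f_\gamma$ depends only on $x_\beta$ with $\beta<\gamma$, the inductive hypothesis gives $q_\gamma(\mathbf y)-q_\gamma(\mathbf x) = y_\gamma - x_\gamma$. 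So we only need $y_\gamma - x_\gamma\in\CO$. Then, since $q_\gamma(\mathbf x)$ and $q_\gamma(\mathbf y)$ both lie in the section $\CR$ of $F/\CO$, their difference lying in $\CO$ forces them to be equal, and hence $y_\gamma=x_\gamma$.

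\textbf{Step 3: the expected obstacle.} The difficulty is showing $y_\gamma-x_\gamma\in\CO$ without assuming the $x_\beta$ are integral. My first approach is to reverse the roles: write $n_0 = n(\mathbf x)^{-1}n(\mathbf y)$ and compare coordinates using that $\mathbf x$ and $\mathbf y$ agree below $\gamma$. Since $\gamma$ is non-simple, the $\gamma$-coordinate of $n(\mathbf x)^{-1}n(\mathbf y)$ should equal $y_\gamma-x_\gamma$ plus commutator contributions. If all commutator contributions cancel because the lower coordinates agree, then $y_\gamma-x_\gamma=(n_0)_\gamma\in\CO$.

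This cancellation is plausible if one uses the subgroup $N_{\ge\gamma}$ generated by roots of height at least that of $\gamma$. Inductively, $n(\mathbf x)^{-1}n(\mathbf y)$ lies in $N_{\ge \mathrm{ht}(\gamma)}$. Modulo $N_{>\mathrm{ht}(\gamma)}$ this subgroup is abelian, with coordinates $y_\delta-x_\delta$ for roots $\delta$ of that height. Taking $\gamma$ of minimal height among the roots where $\mathbf x$ and $\mathbf y$ differ, we get $y_\gamma-x_\gamma=(n_0)_\gamma\in\CO$. Here we use that the coordinate maps on $N(\CO)\cap N_{\ge k}$ modulo $N_{>k}$ are integral, which holds because the model is smooth over $\CO$. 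This would finish the injectivity and hence the proposition.
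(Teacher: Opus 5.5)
Your proposal is correct and follows the paper's proof: the same map $\mathbf x\mapsto n(\mathbf x)N(\CO)$ into $\CD/N(\CO)$ and the same bound $h\le R$ from $v(p_{ij})\ge -r_i$. The one difference is that the paper gets injectivity by citing \cite[Lemma 3.13]{CLYY26}, whereas you reprove it with the height filtration $N_{\ge k}$. In that argument, take $\gamma$ to be the first root in the fixed ordering at which $\mathbf x$ and $\mathbf y$ differ, not merely one of minimal height, because $f_\gamma$ may involve earlier roots of the same height. Also, the integrality of $(n_0)_\gamma$ comes from $N(\CO)=\prod_\gamma X_\gamma(\CO)$ rather than from smoothness of the model alone.
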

\begin{proof}
     We shall prove that
    \[|S_r|\leq\#\left\{ n \in \CD/N(\CO) \Big| h(n)\leq R \right\}.\]
    By \cite[Lemma 3.13]{CLYY26}, there is a bijective map
    \[\left\{\mathbf x=(x_\gamma)_\gamma\in F^{\Phi^+}\ \Big|\ q_\gamma(\mathbf x)\in\mathcal R\text{ for all }\gamma\in\Phi^+\right\} \stackrel{\sim}{\lra} N/N(\CO),\quad \mathbf x\mapsto n(\mathbf x)N(\CO).\]
    In particular, the restriction of the above map to $S_r$
    gives an injection
    \[S_r \xhookrightarrow{\quad} \CD/N(\CO), \quad \mathbf x\mapsto n(\mathbf x)N(\CO).\]

    In the definition of $S_r$, we require that $v(p_{ij}(\mathbf x)) \geq -r_i$ for all $i,j$. We have
    \[H_i(n(\mathbf x)) = \max_j |p_{ij}(\mathbf x)| \leq q^{r_i}\]
    so that
    \[H(n(\mathbf x)) \leq q^R, \quad h(n(\mathbf x)) \leq R.\]
    Therefore, 
    the restriction of the above map $\mathbf x\mapsto n(\mathbf x)N(\CO)$ to $S_r$ gives an injective map
    \[S_r\hookrightarrow\left\{ n \in \CD/N(\CO) \Big| h(n)\leq R \right\}.\]
\end{proof}

\section{The Zeta function}

In this section, we still assume that $G$ is semisimple
and simply connected. We normalize the measure on
$N$ such that $\Vol(N(\CO)) = 1$.

\begin{defn}[The Pl\"ucker height zeta function]\label{defn-zeta} We define the following zeta function for $s \in \BC$
\[Z_\Pl(s) = \int_\CD H(n)^{-s}dn.\]
\end{defn}

\begin{lem}\label{lem-convergence}
    The series defining $Z_\Pl(s)$ is absolutely convergent for $\Re(s) > 1$. In particular, for any $\ell \geq 0$, the cardinality $b_\ell$ is finite and for any $s$ with $\Re(s) > 1$
    \[Z_\Pl(s) = \sum_{\ell \geq 0} b_\ell q^{-\ell s}\]
\end{lem}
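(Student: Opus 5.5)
The key idea is to write $Z_\Pl(s)$ as an integral over $N$ of a spherical section of $I(s-1)$ on $G_\ad$, and reduce convergence to the classical convergence of the Gindikin--Karpelevich intertwining integral. By Proposition \ref{prop-spherical}(2), $H(n)^{-s} = f_s(\dot{w_0}n)$. Since $H(n)\ge 1$, we have $|H(n)^{-s}| = H(n)^{-\Re(s)}$, and for real $\sigma$
\[\int_\CD H(n)^{-\sigma}\,dn \le \int_N f_\sigma(\dot{w_0}n)\,dn .\]
The right-hand side is the standard intertwining integral $M(w_0)f_\sigma(1)$ for the longest Weyl element, applied to the spherical vector of the unramified principal series $I_{B_\ad}^{G_\ad}\chi$ with $\chi = \delta^{(\sigma-1)/2}$. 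In the usual normalization this is $\chi\delta^{1/2}=\delta^{\sigma/2}$.

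The Gindikin--Karpelevich formula (Langlands' criterion) says this integral converges absolutely exactly when $|\chi(a_\gamma)| < 1$ for every positive coroot $\gamma^\vee$, where $a_\gamma=\gamma^\vee(\varpi)$. In terms of $\delta^{(\sigma-1)/2}$ this means $\langle (\sigma-1)\rho,\gamma^\vee\rangle > 0$ for all $\gamma\in\Phi^+$, which holds iff $\sigma>1$. In that range its value is the finite product
\[\prod_{\gamma\in\Phi^+}\frac{1-q^{-1-(\sigma-1)\langle\rho,\gamma^\vee\rangle}}{1-q^{-(\sigma-1)\langle\rho,\gamma^\vee\rangle}} .\]
Hence $Z_\Pl(s)$ converges absolutely for $\Re(s)>1$. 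The cocharacters of $A_\ad$ are the coweights, which is why one works on $G_\ad$ so that $\delta^{s/2}$ makes sense as a character. Convergence itself only needs the integrand to be defined on $\dot{w_0}N$, where $H(n)^{-s}$ makes sense directly.

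It remains to pass from the integral to the series. By Proposition \ref{prop-spherical}(1), $H$ is right $N(\CO)$-invariant, and $\CD$ is right $N(\CO)$-invariant. So $\CD$ is a disjoint union of cosets $nN(\CO)$, each of volume $1$, and
\[\int_\CD H(n)^{-\sigma}\,dn = \sum_{\ell\ge 0} b_\ell\, q^{-\ell\sigma}\]
as an identity of nonnegative extended reals, with $b_\ell\in\BZ_{\ge0}\cup\{\infty\}$. Finiteness of the left-hand side for some real $\sigma>1$ forces every $b_\ell$ to be finite; this proves the postponed lemma. The same decomposition for complex $s$ with $\Re(s)>1$, justified by absolute convergence and dominated convergence, gives $Z_\Pl(s)=\sum_\ell b_\ell q^{-\ell s}$.

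The main obstacle is applying the Gindikin--Karpelevich convergence criterion correctly. One must match the normalization of $I(s-1)$, where $f(bg)=\delta^{s/2}(b)f(g)$, with the standard unnormalized integral $\int_N f(\dot{w_0}n)\,dn$, and check the positivity condition on each positive coroot of the adjoint group. If one prefers to avoid citing the general criterion, the fallback is an induction on reduced words: factor $N$ along $w_0=w_{k_1}\cdots w_{k_L}$ into rank-one pieces and use the $\SL_2$ computation $\int_F \max(1,|x|)^{-2t}\,dx<\infty$ for $t>1/2$. Each piece contributes exponent $\tfrac{\sigma}{2}\langle 2\rho,\gamma^\vee\rangle$ at a positive root $\gamma$; this is $\ge\sigma>1$ and hence $>1/2$, so each piece converges for $\sigma>1$.
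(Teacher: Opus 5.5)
Your argument is correct and is essentially the paper's proof. You rewrite $H(n)^{-s}=f_s(\dot{w_0}n)$, dominate $\int_\CD$ by the full intertwining integral over $N$ (the paper cites Waldspurger's Proposition IV.2.1 where you invoke Gindikin--Karpelevich), and split $\CD$ into right $N(\CO)$-cosets of volume one to obtain $\sum_\ell b_\ell q^{-\ell s}$.

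Your proof that each $b_\ell$ is finite, via the identity in $[0,\infty]$, is a bit more careful than the paper's. Your aside on why one passes to $G_\ad$ is off: $\delta^{s/2}$ is a character on any torus, and the paper uses $G_\ad$ only for the Casselman--Shalika/Fourier step. This plays no role in your argument.
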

\begin{proof}
    By Proposition \ref{prop-spherical}(2), for any $s \in \BC$ and $n \in N$, 
    \[H(n)^{-s} = f_s(\dot{w_0}n)\]
    with $f_s$ the normalized spherical function on the
    induced representation $I(s-1)$.  In particular,
    \[Z_\Pl(s) = \int_\CD f_s(\dot{w_0}n)dn.\]
    The absolutely convergence follows from the absolutely
    convergence of the integral defining the intertwining operator from $I(s-1)$ to $I(1-s)$. This is a special case of \cite[Proposition IV.2.1]{Wal03}.

    As we normalize the measure on $N$ such that 
    $\Vol(N(\CO)) = 1$, for any $s$ with $\Re(s) >1$
    \[Z_\Pl(s) = \sum_{n \in \CD/N(\CO)} H(n)^{-s}
     = \sum_{\ell \geq 0} \sum_{n \in \CD/N(\CO), h(n) = \ell} 
     q^{-\ell s} = \sum_{\ell \geq 0} b_\ell q^{-\ell s}.\]
     In particular, for each $\ell \geq 0$, $b_\ell$ is finite.
\end{proof}

Before we state the main result for the zeta function, we introduce
some notations.

Denote by $\omega_i^\vee \in X_*(A)_\BQ$, $1 \leq i \leq m$ be the fundamental coweights of $G$, that is, $\langle \alpha_i,\omega_j^\vee \rangle = 
\delta_{ij}$. As we assume that $G$ is semisimple and simply connected,
$\omega_i^\vee \in X_*(A_\ad)$. For each $z \in \BZ_{\geq 0}^m$, denote
by
\[\mu_z = \sum_i z_i\omega_i^\vee \in X_*(A_\ad).\]
Denote by $\wh{G_\ad}$ the Langlands dual group of $G_\ad$. In particular,
$\mu_z$ can be viewed as a dominant weight of $\wh{G_\ad}$. Let
$\pi_z$ the irreducible representation on $\wh{G_\ad}$ with the
highest weight $\mu_z$. Denote by
\[P_z(u) = \sum_\chi m(\pi_z,\chi) u^{\langle \rho,\mu_z + \chi \rangle}.\]
Here,  $\chi \in X_*(A_\ad)$ and $m(\pi_z,\chi)$ is the multiplicity
of $\chi$ in $\pi_z$. 

\begin{lem}\label{lem-poly}
    For each $z \in \BZ_{\geq 0}^m$, $P_z(u)$ is
    a polynomial with nonnegative  integral coefficients and
    degree $d_z = \langle 2\rho,\mu_z \rangle$.
\end{lem}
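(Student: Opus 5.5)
We show that every exponent $\langle \rho,\mu_z+\chi\rangle$ with $m(\pi_z,\chi)\neq 0$ is a nonnegative integer lying in $[0,d_z]$, and that the top exponent $d_z$ really occurs. Nonnegativity and integrality of the coefficients are then automatic, since the multiplicities $m(\pi_z,\chi)$ are nonnegative integers.

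\textbf{Integrality.} Here $\rho=\sum_i\omega_i$ is a weight of $G$ (an element of $X^*(A)$), and $\chi,\mu_z\in X_*(A_\ad)$ are cocharacters. Every weight $\chi$ of $\pi_z$ has the form $\chi=\mu_z-\sum_i k_i\alpha_i^\vee$ with $k_i\in\BZ_{\geq 0}$. Here the $\alpha_i^\vee$ are the simple roots of $\wh{G_\ad}$, i.e.\ the simple coroots of $G$. Since $\langle\omega_j,\alpha_i^\vee\rangle=\delta_{ij}$, we get $\langle\rho,\alpha_i^\vee\rangle=1$. Hence
\[\langle\rho,\mu_z+\chi\rangle=2\langle\rho,\mu_z\rangle-\sum_i k_i .\]
Moreover
\[\langle 2\rho,\mu_z\rangle=\sum_i z_i\langle 2\rho,\omega_i^\vee\rangle=\sum_i z_i a_i ,\]
where $2\rho=\sum_i a_i\alpha_i$, and $a_i\in\BZ$ because $2\rho$ is a sum of positive roots. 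So every exponent is an integer.

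\textbf{Range of exponents.} The set of weights of $\pi_z$ is stable under the Weyl group $W$, which is shared by $G$ and $\wh{G_\ad}$. Every weight $\chi$ satisfies $w_0\mu_z\le\chi\le\mu_z$ in the dominance order (the lowest weight is $w_0\mu_z$), so $\chi-w_0\mu_z$ is a nonnegative combination of the $\alpha_i^\vee$. Pairing with $\rho$, which is positive on each $\alpha_i^\vee$, gives
\[\langle\rho,w_0\mu_z\rangle\le\langle\rho,\chi\rangle\le\langle\rho,\mu_z\rangle .\]
Since $w_0\rho=-\rho$, we have $\langle\rho,w_0\mu_z\rangle=\langle w_0\rho,\mu_z\rangle=-\langle\rho,\mu_z\rangle$. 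Therefore
\[0\le\langle\rho,\mu_z+\chi\rangle\le 2\langle\rho,\mu_z\rangle=d_z .\]
Equality at the top holds exactly for $\chi=\mu_z$, which has multiplicity $1$. So the coefficient of $u^{d_z}$ is $1$, and $P_z$ has degree exactly $d_z$. The constant term comes from $\chi=w_0\mu_z$ and is also $1$.

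The only point needing care is the dictionary between $G$ and $\wh{G_\ad}$. The weights of $\wh{G_\ad}$ are $X_*(A_\ad)$, and its roots are the coroots of $G$. The $\omega_i^\vee$ lie in $X_*(A_\ad)$, so $\mu_z$ is a genuine dominant weight of $\wh{G_\ad}$, which is simply connected. Once this is in place, the argument is the standard fact that weights lie in the convex hull of the Weyl orbit of the highest weight, together with the computation $\langle\rho,\alpha_i^\vee\rangle=1$.
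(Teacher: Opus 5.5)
Your proof is correct and follows essentially the same route as the paper: both bound $\langle\rho,\chi\rangle$ from above using $\mu_z-\chi\in\sum_i\BZ_{\geq 0}\alpha_i^\vee$, and from below via the $w_0$-symmetry of the weights together with $w_0\rho=-\rho$. Your version is slightly more complete, since you also check that the exponents are integers (via $\langle\rho,\alpha_i^\vee\rangle=1$ and $\langle 2\rho,\mu_z\rangle=\sum_i a_iz_i$) and that the degree is exactly $d_z$ because $\mu_z$ has multiplicity one; the paper leaves both points implicit.
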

\begin{proof}
    We shall prove that for any weight $\chi \in X_*(A_\ad)$ of $\pi_z$, 
    \[0 \leq \langle \rho,\mu_z + \chi \rangle \leq d_z.\]
    In fact, for any weight $\chi$ of $\mu_z$, we have
    $\mu_z-\chi = \sum_{i=1}^m c_i \alpha_i^\vee$ with
    some $c_i \in \BZ_{\geq 0}$.
Therefore,  $\langle \rho,\mu_z - \chi \rangle \geq 0$ and $\langle \rho,\chi \rangle \leq \langle \rho,\mu_z \rangle$. 
On the other hand, $w_0\chi$ is also a weight of $\mu_z$ so that  \[- \langle \rho,\chi \rangle = \langle \rho,w_0\chi \rangle \leq \langle \rho,\mu_z \rangle\] 

We obtain that
$- \langle \rho,\mu_z \rangle \leq \langle \rho,\chi \rangle \leq \langle \rho,\mu_z \rangle$ and the Lemma holds.
\end{proof}

\begin{thm}\label{holomorphic continuation of F(s)}
   We have the following.
   \begin{enumerate}
       \item 
       In the region $\Re(s) > 1$, we have
       \begin{equation}\label{equ-cs}
       Z_\Pl(s) = (1-q^{-1})^m \CE(q^{1-s})\sum_{z \in \BZ_{\geq 0}^m} q^{-Z}P_z(q^{1-s}).
       \end{equation}
       Here, the polynomial $\CE(u) = \prod_{\gamma \in \Phi^+} (1-q^{-1}u^{\langle \rho,\gamma^\vee\rangle})$ and
     for each $z = (z_i)_i \in \BZ_{\geq 0}^m$, we denote by $Z = \sum_i z_i$. 
       \item 
       Write $2\rho = \sum_i a_i\alpha_i$ with $a_i \geq 1$
       and denote by 
       \[M_G = \max_i a_i.\]
       The right hand side of Equation (\ref{equ-cs}) is absolutely convergent
       for $\Re(s) > 1 - \frac{1}{M_G}$. In particular, this gives the holomorphic 
       continuation of $Z_\Pl(s)$ to $\Re(s) > 1-\frac{1}{M_G}$. Moreover, if 
       $s = 1-\frac{1}{M_G} + \delta$ for a positive real number $\delta$ near to $0$, 
       then 
       \[Z_\Pl(s) \ll \delta^{-(L+m)}.\]
   \end{enumerate}
\end{thm}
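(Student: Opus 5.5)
Part (1) rests on the Fourier expansion announced in the introduction (Lemma \ref{lem-fourier}). Since $\CD$ is cut out by $x_{\alpha_i}(n)\in\CO$ and $\psi_F$ is trivial on $\CO$ but not on $\varpi^{-1}\CO$, orthogonality of characters on $(F/\CO)^m$ gives
\[\mathbf 1_\CD(n)=\int_{\CO^m}\ov{\psi_c(n)}\,dc .\]
For $\Re(s)>1$ the integral $\int_N f_s(\dot w_0 n)\,dn$ converges absolutely, so Fubini applies and
\[Z_\Pl(s)=\int_{\CO^m}J_s(c)\,dc,\qquad J_s(c)=\int_N f_s(\dot w_0n)\ov{\psi_c(n)}\,dn .\]
We then split $\CO^m$ by valuation: write $c_i=\varpi^{z_i}u_i$ with $u_i\in\CO^\times$. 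The set with valuation vector $z$ has measure $(1-q^{-1})^mq^{-Z}$.

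Next we identify the character $\psi_c$. We have $\psi_c(n)=\psi_0(t n t^{-1})$ with $t=\mu_z(\varpi)u\in A_\ad$. This is exactly why we work on $G_\ad$, since $\omega_i^\vee\in X_*(A_\ad)$ lets us realize every $c$ by conjugation. Substituting $n\mapsto t^{-1}nt$ shows that $J_s(c)$ equals, up to a Jacobian $\delta(t)^{\pm1}$ and the factor $\delta^{s/2}$ coming from $f_s(\dot w_0 t^{-1}\cdot)$, the spherical Whittaker function $W_s(\mu_z(\varpi))$ for $\psi_0$, conductor $\CO$. The unit part $u\in A_\ad(\CO)$ drops out because $f_s$ is spherical.

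The Casselman--Shalika formula (Theorem \ref{thm-cs}) now gives
\[W_s(\mu_z(\varpi))=\delta^{1/2}(\mu_z(\varpi))\prod_{\gamma>0}\bigl(1-q^{-1}\tau^{\gamma^\vee}\bigr)\,\tr\pi_z(\tau),\]
where $\tau$ is the Satake parameter of $I(s-1)$, namely $\tau=\rho(\varpi)^{\,s-1}$ in the coroot sense. Evaluating at this parameter gives $\tau^{\gamma^\vee}=q^{(1-s)\langle\rho,\gamma^\vee\rangle}$, so the product is $\CE(q^{1-s})$. It also gives $\chi(\tau)=q^{(1-s)\langle\rho,\chi\rangle}$. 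Collecting the powers of $q$ and bookkeeping the exponent $\langle\rho,\mu_z\rangle$, we should obtain exactly $q^{-Z}P_z(q^{1-s})$. The main care needed is to track the sign conventions, namely $\ov{\psi}$ versus $\psi$ and dominance of $\mu_z$ versus $-\mu_z$, so that only $z\ge0$ survives. This gives \eqref{equ-cs}.

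For part (2), set $u=q^{1-s}$ with $x=|u|=q^{1-\Re s}$. Lemma \ref{lem-poly} shows that $P_z$ has nonnegative coefficients and degree $d_z$, so
\[|P_z(u)|\le P_z(x)\le \dim\pi_z\cdot x^{d_z}\]
for $x\ge1$. Here $d_z=\langle2\rho,\mu_z\rangle=\sum_i a_iz_i\le M_G Z$. By the Weyl dimension formula,
\[\dim\pi_z=\prod_{\gamma>0}\frac{\langle\mu_z+\rho^\vee,\gamma\rangle}{\langle\rho^\vee,\gamma\rangle}\ll (1+Z)^{L}.\]
Hence the $z$-th term is $\ll(1+Z)^Lq^{-Z}x^{M_GZ}$. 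The number of $z$ with a given $Z$ is $\ll(1+Z)^{m-1}$, so the series is dominated by
\[\sum_Z (1+Z)^{L+m-1}\,y^Z,\qquad y=q^{-1+M_G(1-\Re s)} .\]
This converges when $y<1$, i.e.\ $\Re s>1-1/M_G$, and uniform convergence on compacta gives the holomorphic continuation. At $s=1-\frac1{M_G}+\delta$ we have $y=q^{-M_G\delta}$, so $1-y\asymp\delta$. The sum is then $\ll(1-y)^{-(L+m)}\ll\delta^{-(L+m)}$, and $\CE$ is bounded near this point.

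The main obstacle is the normalization in part (1). We must match Haar measures, the Jacobian of the conjugation, $\delta^{s/2}$, and the Satake parameter precisely so that the exponent of $q$ comes out as $q^{-Z}$ with $P_z(q^{1-s})$. Both the estimate and the continuation are insensitive to these constants, provided the exponent of $u$ is $\langle\rho,\mu_z+\chi\rangle$.
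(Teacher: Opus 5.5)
Your proposal is correct and follows the paper's own route: Fourier expansion of $\mathbf 1_\CD$ over $\CO^m$, then conjugating $\psi_c$ to $\psi_0$ by $t_ut_z\in A_\ad$ to reduce $J_s(c)$ to the Casselman--Shalika value $W_s(t_z)$. Part (2) likewise uses the nonnegative coefficients of $P_z$ together with the Weyl dimension bound, exactly as in the paper. The normalization you left open works out as in the paper, namely $J_s(c)=\delta^{s/2-1}(t_z)W_s(t_z)=\CE(q^{1-s})P_z(q^{1-s})$, so the factor $q^{-Z}$ comes entirely from the measure of the valuation shell and not from the Whittaker computation.
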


We consider the proof of Theorem \ref{holomorphic continuation of F(s)} (1).

For each $c \in \CO^m$, denote by
\[J_s(c) = \int_N f_s(\dot{w_0}n)\ov{\psi_c(n)}dn.\]
Here, $f_s \in I(s-1)$ is the normalized spherical function in  Proposition \ref{prop-spherical}(2).

\begin{lem}[Fourier expansion of $Z_\Pl(s)$]\label{lem-fourier}
    For any $s$ with $\Re(s) > 1$, we have
    \[Z_\Pl(s) = \int_{\CO^m} J_s(c)dc.\]
    Here, the measure on $\CO$ is given with $\Vol(\CO) = 1$.
\end{lem}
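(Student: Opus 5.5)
Plan: interchange the integrals over $c\in\CO^m$ and $n\in N$, and use orthogonality of characters of $\CO$.

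By Proposition \ref{prop-spherical}(2), $f_s(\dot{w_0}n)=H(n)^{-s}$. For $\Re(s)>1$ the function $n\mapsto H(n)^{-s}$ is absolutely integrable over all of $N$, by the same convergence of the intertwining integral (\cite[Proposition IV.2.1]{Wal03}) used in Lemma \ref{lem-convergence}. Hence $|f_s(\dot{w_0}n)\ov{\psi_c(n)}|=H(n)^{-\Re(s)}$ is integrable on $\CO^m\times N$, since $\CO^m$ has volume $1$. Fubini then gives
\[\int_{\CO^m}J_s(c)\,dc=\int_N H(n)^{-s}\left(\int_{\CO^m}\ov{\psi_F\Big(\sum_i c_ix_{\alpha_i}(n)\Big)}\,dc\right)dn.\]

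The inner integral factors as $\prod_i\int_\CO\ov{\psi_F(c_ix_{\alpha_i}(n))}\,dc_i$. For fixed $x\in F$, the map $c\mapsto\psi_F(cx)$ is a character of $\CO$. It is trivial exactly when $x\CO\subset\ker\psi_F$. Because $\psi_F$ is trivial on $\CO$ but nontrivial on $\varpi^{-1}\CO$, this happens exactly when $x\in\CO$. Orthogonality of characters on the compact group $\CO$ of volume $1$ therefore gives
\[\int_\CO\ov{\psi_F(cx)}\,dc=\mathbf 1_{\CO}(x).\]
So the inner integral equals the indicator function of $\CD=\{n: x_{\alpha_i}(n)\in\CO \text{ for all } i\}$.

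Substituting this back yields $\int_{\CO^m}J_s(c)\,dc=\int_\CD H(n)^{-s}\,dn=Z_\Pl(s)$, by Definition \ref{defn-zeta}.

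The only point requiring care is justifying Fubini, which needs absolute convergence of $\int_N H(n)^{-\Re(s)}\,dn$ over all of $N$, not just over $\CD$. This is exactly the Gindikin–Karpelevich convergence range for the long intertwining operator on $I(s-1)$, namely $\Re(s)>1$. As a consequence, each $J_s(c)$ is itself absolutely convergent and bounded uniformly in $c$.
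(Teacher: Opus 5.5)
Your proposal is correct and follows essentially the same route as the paper. You use Fubini to swap the $c$- and $n$-integrals, then orthogonality of characters on $\CO$ to turn the inner integral into $\prod_i 1_\CO(x_{\alpha_i}(n))$, the indicator of $\CD$, and finish with Proposition \ref{prop-spherical}(2); your explicit justifications — absolute convergence of the full intertwining integral over $N$ for $\Re(s)>1$, and why $\int_\CO \ov{\psi_F(cx)}\,dc = 1_\CO(x)$ — fill in details the paper leaves implicit.
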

\begin{proof}
    By the Fubini theorem, we have
    \[\int_{\CO^m} J_s(c)dc = \int_N f_s(\dot{w_0}n) \int_{\CO^m} 
    \ov{\psi_c(n)}dc dn = \int_N f_s(\dot{w_0}n)\prod_i 1_\CO(x_{\alpha_i}(n))dn = \int_\CD f_s(\dot{w_0}n)dn.\]
    The Lemma now follows from Proposition \ref{prop-spherical}(2). 
\end{proof}

\begin{thm}[Casselman-Shalika] \label{thm-cs}
    Let $c = (c_i)_i \in \CO^m$ with $c_i = u_i\varpi^{z_i}$, $u_i \in \CO^\times$ and $z_i \geq 0$. Denote by $z = (z_i)_i \in \BZ_{\geq 0}^m$. Then
    \[J_s(c) = \CE(q^{1-s}) P_z(q^{1-s}).\]
\end{thm}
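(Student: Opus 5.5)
We would derive the formula by reducing $J_s(c)$ to the standard unramified Jacquet integral, evaluated at a torus element, and then quoting the Casselman--Shalika formula on $G_\ad$. Since $G$ is simply connected, $G_\ad$ is adjoint, so the fundamental coweights $\omega_i^\vee$ lie in $X_*(A_\ad)$. Hence for $c_i=u_i\varpi^{z_i}$ there is $a_c\in A_\ad$ with $\alpha_i(a_c)=c_i$ for all $i$. Explicitly, $a_c=\varpi^{\mu_z}a_u$ with $a_u\in A_\ad(\CO)$ and $\varpi^{\mu_z}=\mu_z(\varpi)$. This is exactly why we pass to $G_\ad$.

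With this choice, $\psi_c(n)=\psi_0(a_c n a_c^{-1})$. Substituting $n\mapsto a_c^{-1}na_c$ in $J_s(c)$ produces a factor $\delta(a_c)^{-1}$ from $dn$. Then
\[
\dot w_0 a_c^{-1} n a_c=(\dot w_0 a_c^{-1}\dot w_0^{-1})\,\dot w_0 n\, a_c .
\]
Using the left $B_\ad$-equivariance of $f_s$, we extract $\delta^{s/2}(\dot w_0 a_c^{-1}\dot w_0^{-1})=\delta^{s/2}(a_c)$, since $w_0$ inverts $\delta$ on $A$. This leaves the Whittaker function
\[
W_s(a_c)=\int_N f_s(\dot w_0 n a_c)\,\ov{\psi_0(n)}\,dn .
\]
The unit part $a_u\in G_\ad(\CO)$ can be absorbed into $n$ (it preserves $N(\CO)$, the measure, and $\psi_0$ up to the units $u_i$, which do not matter since the conductor of $\psi_F$ is $\CO$). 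By right $G_\ad(\CO)$-invariance, we may therefore replace $a_c$ by $\varpi^{\mu_z}$, with $\mu_z$ dominant.

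Next we apply the Casselman--Shalika formula to the spherical vector of $I(s-1)=\mathrm{Ind}\,\chi\delta^{1/2}$ with $\chi=\delta^{(s-1)/2}$. For dominant $\lambda$ it gives
\[
W(\varpi^\lambda)=\delta^{1/2}(\varpi^\lambda)\prod_{\gamma>0}\bigl(1-q^{-1}\chi(\varpi^{\gamma^\vee})\bigr)\,\mathrm{ch}\,\pi_\lambda(t_\chi),
\]
where $t_\chi\in\wh{A_\ad}$ is the Satake parameter and the character is that of $\wh{G_\ad}$ with highest weight $\lambda$. Here
\[
\chi(\varpi^{\gamma^\vee})=q^{-(s-1)\langle\rho,\gamma^\vee\rangle}=u^{\langle\rho,\gamma^\vee\rangle},\qquad u=q^{1-s},
\]
so the product is exactly $\CE(u)$. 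Similarly, $\mathrm{ch}\,\pi_{\mu_z}(t_\chi)=\sum_\chi m(\pi_z,\chi)u^{\langle\rho,\chi\rangle}$, possibly with $\chi\mapsto-\chi$ depending on conventions, which is harmless by $w_0$-symmetry of the character.

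It then remains to collect the scalar factors:
\[
\delta(a_c)^{-1}\,\delta^{s/2}(\varpi^{\mu_z})\,\delta^{1/2}(\varpi^{\mu_z})=q^{\langle 2\rho,\mu_z\rangle}\,q^{-(s+1)\langle\rho,\mu_z\rangle}=q^{(1-s)\langle\rho,\mu_z\rangle}=u^{\langle\rho,\mu_z\rangle}.
\]
Multiplying this into the character gives $P_z(u)$, as claimed. The sign of the exponent of $\delta(a_c)$ coming from the change of variables has to be matched with the exponent appearing here, and this is part of the bookkeeping below.

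The main obstacle is this normalization bookkeeping rather than any new idea. One must check that the version of Casselman--Shalika being quoted uses the same intertwining convention ($\dot w_0 n$ rather than $\dot w_0^{-1}n$), the same measure $\Vol(N(\CO))=1$, and $\psi_0$ of conductor exactly $\CO$, so that no stray constant appears. One must also confirm that the Jacobian of $n\mapsto a_c^{-1}na_c$ is $\delta(a_c)^{\pm1}$ with the sign making the $q$-powers combine into $u^{\langle\rho,\mu_z\rangle}$. We would verify all of these conventions first on $\SL_2$, where both sides can be computed by hand, before asserting the general identity.
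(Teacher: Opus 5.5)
Your proposal is correct and follows essentially the same route as the paper: realize $c$ through $t_ut_z\in A_\ad$ with $t_z=\mu_z(\varpi)$, conjugate $\psi_c$ back to $\psi_0$, collect $\delta^{s/2-1}(t_z)$ from the Jacobian and the left $B_\ad$-equivariance of $f_s$, and then apply the Casselman--Shalika formula \cite[Theorem 6.5.1]{HKP10}, which gives $W_s(t_z)=\delta^{1-s/2}(t_z)\CE(q^{1-s})P_z(q^{1-s})$ and cancels that factor. The one loose point is your parenthetical about absorbing $a_u$ into $n$: conjugation by $a_u$ does change $\psi_0$ into $\psi_{(u_i)_i}$, and the conductor does not undo this. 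The correct justification, which you also give, is right $G_\ad(\CO)$-invariance of $f_s$, using that $a_u$ commutes with $\varpi^{\mu_z}$.
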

\begin{proof}
    This is in fact the Casselman-Shalika formula for spherical
    Whittaker functions. Let $W_s(g)$ be the unique spherical Whittaker function of $I(s-1)$
\[W_s(g) = \int_N f_s(\dot{w_0}ng)\ov{\psi_0(n)}dn.\]
    Then the Casselman-Shalika formula can be stated as following
    (\cite[Theorem 6.5.1]{HKP10})
    \[
    W_s(t_z)=\delta^{1/2}(t_z)\mathcal{E}(q^{1-s})\tr_{\pi_z}(A_s), 
    \quad z \in \BZ_{\geq 0}^m.
    \]
    Here,
    \begin{itemize}
        \item for each $z \in \BZ_{\geq 0}^m$, $t_z = \mu_z(\varpi) \in A_\ad$ and $\tr_{\pi_z}$ is the trace for $\pi_z$.
        \item  the Satake parameter
    $A_s \in \wh{A_\ad}(\BC)$ for $I(s-1)$ is characterized by
    \[\chi(A_s) = q^{-(s-1)\langle \rho,\chi \rangle}, \quad \chi \in X^*(\wh{A_\ad}) = X_*(A_\ad).\]
    \end{itemize}
    We have
    \[\tr_{\pi_z}(A_s) = \sum_\chi m(\pi_z,\chi) q^{(1-s)\langle \rho,\chi \rangle}, \quad \delta^{1/2}(t_z) = q^{-\langle \rho,\mu_z \rangle}.\]
    This gives 
    \[
    W_s(t_z)=\delta^{1-s/2}(t_z)\CE(q^{1-s})P_z(q^{1-s}).
    \]

    Now we fix $c = (c_i)$ with $c_i = u_i\varpi^{z_i}$, $u_i \in \CO^\times$ and $z_i \geq 0$. Consider $z = (z_i)_i \in \BZ_{\geq 0}^m$.
    Then, we have
    \[\alpha_i(t_z) = \varpi^{z_i}, \quad 1 \leq i \leq m.\]
    Moreover, as $X^*(A_\ad)$ is spanned by simple roots $\alpha_i$, $1 \leq i\leq m$, there exists $t_u \in A_\ad(\CO)$ such that
    \[\alpha_i(t_u) = u_i, \quad 1 \leq i \leq m.\]
    Therefore,
    \[\begin{aligned}
        J_s(c) &= \int_N f_s(\dot{w_0}n)\ov{\psi_c(n)}dn \\
        &= \int_N f_s(\dot{w_0}n)\ov{\psi_0\left(\Ad(t_ut_z)n \right)}dn \\
        &= \delta^{-1}(t_z)\int_N f_s\left(\dot{w_0}\Ad(t_ut_z)^{-1}n\right)
        \ov{\psi_0(n)}dn \\
        &= \delta^{s/2-1}(t_z)\int_N f_s(\dot{w_0}nt_z)\ov{\psi_0(n)}dn\\
        &= \delta^{s/2-1}(t_z)W_s(t_z).
    \end{aligned}\]
    Now, the result follows from the above Casselman-Shalika formula
    for $W_s(t_z)$.
\end{proof}

By Lemma \ref{lem-fourier} and Theorem \ref{thm-cs} with $Z = \sum_i z_i$, we have
\[
\begin{aligned}
Z_\Pl(s) &= \sum_{z = (z_i)_i \in \BZ_{\geq 0}^m} q^{-Z}
\int_{(\CO^\times)^m} J_s\left( (u_i\varpi^{z_i})_i \right) \prod_i du_i \\
&= (1-q^{-1})^m \sum_{z= (z_i)_i \in \BZ_{\geq 0}^m} q^{-Z}
J_s\left( (\varpi^{z_i})_i \right) \\
&= (1-q^{-1})^m \CE(q^{1-s})\sum_{z \in \BZ_{\geq 0}^m} q^{-Z}P_z(q^{1-s}).
\end{aligned}
\]
We obtain Theorem \ref{holomorphic continuation of F(s)} (1).

Now, we consider the proof of Theorem  \ref{holomorphic continuation of F(s)}(2). 

\begin{lem}
    Let $z \in \BZ_{\geq 0}^m$ and consider the representation $\pi_z$ on $\wh{G_\ad}$. Then
    \[P_z(1) = \dim \pi_z \leq (1+Z)^L\]
    with $Z = \sum_i z_i$ and $L = |\Phi^+|$.
\end{lem}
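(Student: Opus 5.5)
The first equality is immediate from the definition of $P_z$. Setting $u=1$ gives
\[P_z(1)=\sum_\chi m(\pi_z,\chi),\]
which is the total dimension of all weight spaces of $\pi_z$, namely $\dim \pi_z$.

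For the inequality I will use the Weyl dimension formula for $\wh{G_\ad}$. Its roots are the coroots $\gamma^\vee$, $\gamma\in\Phi$, of $G$. Its positive coroots are $\gamma^\vee$ with $\gamma\in\Phi^+$. Half the sum of these positive coroots is $\rho^\vee$, and $\langle \alpha_i,\rho^\vee\rangle=1$ for every $i$. Pairing the weights $\mu_z\in X_*(A_\ad)$ against roots via $\langle\cdot,\cdot\rangle$, the formula reads
\[\dim\pi_z=\prod_{\gamma\in\Phi^+}\frac{\langle \gamma,\mu_z+\rho^\vee\rangle}{\langle \gamma,\rho^\vee\rangle}=\prod_{\gamma\in\Phi^+}\left(1+\frac{\langle\gamma,\mu_z\rangle}{\langle\gamma,\rho^\vee\rangle}\right).\]
The roots of the dual group are the $\gamma^\vee$, and their coroots are the $\gamma$, so the pairing appearing in the formula is indeed $\langle\gamma,\cdot\rangle$.

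Next I bound each factor. Write $\gamma=\sum_i n_i\alpha_i$ with $n_i\in\BZ_{\geq 0}$. Then
\[\langle\gamma,\mu_z\rangle=\sum_i n_iz_i \quad\text{and}\quad \langle\gamma,\rho^\vee\rangle=\sum_i n_i=\mathrm{ht}(\gamma)\geq 1.\]
It follows that
\[\frac{\langle\gamma,\mu_z\rangle}{\langle\gamma,\rho^\vee\rangle}=\frac{\sum_i n_iz_i}{\sum_i n_i}\leq \max_i z_i\leq Z.\]
Each factor is therefore at most $1+Z$. Since there are $L=|\Phi^+|$ factors, we get $\dim\pi_z\leq(1+Z)^L$.

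The only delicate point is bookkeeping the duality correctly: the positive roots of $\wh{G_\ad}$ correspond to the positive coroots of $G$, and the dimension formula pairs weights in $X_*(A_\ad)$ with the roots $\gamma$ of $G$, with $\rho^\vee$ in the role of $\rho$. Once this is set up, the bound is elementary.
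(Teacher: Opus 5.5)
Your proof is correct and is essentially the paper's argument: apply the Weyl dimension formula for $\wh{G_\ad}$ with $\rho^\vee=\sum_i\omega_i^\vee$ in the role of the half-sum of positive roots, then bound each factor $1+\frac{\sum_i k_i z_i}{\sum_i k_i}$ by $1+Z$. Your intermediate bound by $\max_i z_i$ and your explicit check that $P_z(1)=\sum_\chi m(\pi_z,\chi)=\dim\pi_z$ only add detail to the same route.
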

\begin{proof}
   By the Weyl dimension formula \cite[Corollary 24.6]{FH91},
we have
\[\dim \pi_z=\prod_{\gamma\in\Phi^+}\frac{\left<\mu_z+\widehat{\rho},\gamma\right>}{\left<\widehat{\rho},\gamma\right>}\]
where $\widehat{\rho} = \sum_i \omega_i^\vee \in X_*(A_\ad)$ and $\gamma \in \Phi^+$ runs over positive
coroots of $\wh{G_\ad}$.

If $\gamma=\sum_ik_i\alpha_i$, then
\[\frac{\left<\mu_z+\widehat{\rho},\gamma\right>}{\left<\widehat{\rho},\gamma\right>} = 1+\frac{\sum_ik_iz_i}{\sum_ik_i}\leq 1+Z.\]
The Lemma now holds. 
\end{proof}

Write $u = q^{1-s}$. Consider
\[1 -\frac{1}{M_G} < \Re(s) \leq 1, \quad 1 \leq |u| < q^{1/M_G}.\]
For each $z \in \BZ_{\geq 0}^m$, By Lemma \ref{lem-poly}, we can write
\[P_z(u)=\sum_{k=0}^{d_z}c_ku^k,\quad c_k\geq 0\]
where
$d_z = \langle 2\rho,\mu_z \rangle = \sum_i a_iz_i$.

As $|u|\geq1$, we have
\[|P_z(u)|\leq\sum_{k=0}^{d_z}c_k|u|^k\leq|u|^{d_z}
\sum_{k}c_k=|u|^{d_z}P_z(1)\leq|u|^{d_z}(1+Z)^L\]
    and
\begin{equation}\label{ineq-P_z}
    q^{-Z}|P_z(u)|\leq q^{-Z}|u|^{d_z}(1+Z)^L=\prod_i(q^{-1}|u|^{a_i})^{z_i} \cdot
    (1+Z)^L.
\end{equation}

Moreover, as $|u|<q^{1/M_G}$, there exists a real number $x<1$ (depending on $|u|$) such that $q^{-1}|u|^{a_i}<x$ for all $i$. Hence we have
\[\sum_{z \in \BZ_{\geq 0}^m} q^{-Z}|P_z(u)|\leq\sum_{z \in \BZ_{\geq 0}^m}(1+Z)^Lx^{Z} < \infty.\]
In particular, the right hand side of Equation 
(\ref{equ-cs}) is absolutely convergent for any $s$
with $\Re(s) > 1 - \frac{1}{M_G}$. Combing with Theorem \ref{holomorphic continuation of F(s)}(1), this gives the holomorphic continuation of $Z_\Pl(s)$ to $\Re(s)>1-\frac{1}{M_G}$.

We estimate $Z_\Pl(s)$ near its convergence boundary. 
Consider
\[s = 1-\frac{1}{M_G}+\delta < 1\]
where $\delta$ is a positive real number and near to
$0$. 
    
By (\ref{equ-cs})
\[Z_\Pl(s) = (1-q^{-1})^m \CE(q^{1-s})\sum_{z \in \BZ_{\geq 0}^m} q^{-Z}P_z(q^{1-s}), \quad \CE(q^{1-s}) = \prod_{\gamma \in \Phi^+} (1-q^{-1}q^{(1-s)\langle \rho,\gamma^\vee\rangle}).\]
As $1-s = \frac{1}{M_G} - \delta < \frac{1}{M_G}$,
$\CE(q^{1-s})$ is bounded by a constant (depending only
on $F$ and $G$). By (\ref{ineq-P_z}), we have
\[Z_\Pl(s)\ll\sum_{z \in \BZ_{\geq 0}^m}(1+Z)^Lq^{-\sum_i[1-(1-s)a_i]z_i}.\]
As $a_i \geq 1$ for all $i$, we have
\[1-(1-s)a_i=1-a_i/M_G+\delta a_i\geq\delta\]
and
\[Z_\Pl(s) \ll\sum_{z \in \BZ_{\geq 0}^m}(1+Z)^Lq^{-\delta Z} = \sum_{Z\geq 0}\binom{Z+m-1}{m-1}(1+Z)^Lq^{-\delta Z}.\]
As
\[\binom{Z+m-1}{m-1} \leq \frac{m^{m-1}}{(m-1)!} (1+Z)^{m-1},\]
we obtain
\[Z_\Pl(s) \ll \sum_{Z \geq 0} (1+Z)^{L+m-1} q^{-\delta Z} \ll (1-q^{-\delta})^{-(L+m)} \ll \delta^{-(L+m)}.\]
In the above, we use
\[\sum_{n=0}^\infty (n+1)^kt^n \ll_k (1-t)^{-(k+1)}\]
and $1-q^{-\delta} \sim (\log q) \delta$ when $\delta \ra 0+$. We complete the proof of Theorem \ref{holomorphic continuation of F(s)} (2).

Applying the bounding of the zeta function $Z_\Pl(s)$ in
Theorem \ref{holomorphic continuation of F(s)} (2), we 
give a bound for Pl\"ucker-constrained sets.

\begin{thm}\label{thm-bound-S_r}
    For any $r \in \BZ_{\geq 0}^m$,
    \[ |S_r| \ll q^{\left(1-\frac{1}{2M_G}\right)R}, \quad R = \sum_i r_i.\]
\end{thm}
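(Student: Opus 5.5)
We combine Proposition \ref{bound for S_r} with a Rankin-type argument applied to the Dirichlet series $Z_\Pl(s)=\sum_{\ell\ge0} b_\ell q^{-\ell s}$. Proposition \ref{bound for S_r} gives $|S_r|\le \sum_{\ell=0}^R b_\ell$, so it suffices to bound the partial sums of the nonnegative coefficients $b_\ell$.

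For any real $\sigma$ in the half-plane of holomorphy, the Dirichlet series has nonnegative coefficients. Hence it converges at every real point of the region where it is holomorphic: by Landau's theorem for power series in $t=q^{-s}$ with nonnegative coefficients, the abscissa of convergence is a singularity. Alternatively, we can check this directly, since the right-hand side of (\ref{equ-cs}) has nonnegative terms when $s$ is real, and its monotone convergence as $s\downarrow 1$ extends the identity $Z_\Pl(\sigma)=\sum_\ell b_\ell q^{-\ell\sigma}$ to real $\sigma>1-\frac1{M_G}$.

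Then for such real $\sigma\in(0,1)$ we have the Rankin bound
\[\sum_{\ell\le R} b_\ell \le \sum_{\ell\le R} b_\ell q^{(R-\ell)\sigma}\le q^{R\sigma}\sum_{\ell\ge0}b_\ell q^{-\ell\sigma}=q^{R\sigma}Z_\Pl(\sigma).\]
Here we used $q^{(R-\ell)\sigma}\ge1$ for $\ell\le R$ and $\sigma\ge0$. Note $1-\frac1{M_G}\ge0$ since $M_G\ge1$.

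Now take $\sigma=1-\frac1{M_G}+\delta$ with $\delta=\frac1{2M_G}$, which is fixed and positive. Theorem \ref{holomorphic continuation of F(s)}(2) gives that $Z_\Pl(\sigma)$ is finite. Concretely, its proof bounds it by $\sum_Z (1+Z)^{L+m-1}q^{-\delta Z}<\infty$, a constant depending only on $q$ and $G$. This yields
\[|S_r|\le q^{(1-\frac1{2M_G})R}\,Z_\Pl\!\left(1-\tfrac1{2M_G}\right)\ll q^{(1-\frac1{2M_G})R}.\]

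The main point requiring care is the justification that the series $\sum b_\ell q^{-\ell\sigma}$ actually converges, and equals the continued value, at real $\sigma<1$. The continuation in Theorem \ref{holomorphic continuation of F(s)} is only stated as holomorphic continuation. I would handle this by noting that, as a power series in $t=q^{-s}$ with nonnegative coefficients $b_\ell$, $Z_\Pl$ is analytic in $|t|<q^{-(1-1/M_G)}$ along the positive real axis. By Pringsheim's theorem, the radius of convergence is therefore at least $q^{-(1-1/M_G)}$. Equivalently, I would argue directly that for real $\sigma$ all terms in (\ref{equ-cs}) apart from $\CE$ are nonnegative, and since $\CE(q^{1-s})$ is a polynomial, the two sides agree as power series in $t$.
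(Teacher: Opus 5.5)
Your proposal is correct and follows the paper's route: Proposition \ref{bound for S_r}, Landau's theorem (or your equivalent power-series argument in $t=q^{-s}$) to get convergence of $\sum_\ell b_\ell q^{-\ell\sigma}$ for real $\sigma>1-\frac{1}{M_G}$, and then the Rankin bound $|S_r|\le q^{\sigma R}Z_\Pl(\sigma)$. The one difference is that you fix $\sigma=1-\frac{1}{2M_G}$, which needs only finiteness of $Z_\Pl$ at that point; the paper instead takes $\delta=\frac{1}{2M_G(1+R)}$ and uses the $\delta^{-(L+m)}$ growth rate to get the sharper intermediate bound $(1+R)^{L+m}q^{(1-\frac{1}{M_G})R}$, which it then weakens to the stated one.
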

\begin{proof}

    By Proposition \ref{bound for S_r}, we have
    \[|S_r|\leq\sum_{\ell=0}^Rb_\ell.\]
    
    Since $b_\ell\geq 0$, Landau's theorem on Dirichlet series with nonnegative coefficients \cite[Theorem 1]{Mau11}, together with
    Lemma \ref{lem-convergence} and
    Theorem \ref{holomorphic continuation of F(s)}, implies that
    \[
    Z_\Pl(s)=\sum_{\ell\geq 0}b_\ell q^{-\ell s}
    \]
    with absolute convergence for $\Re(s)>1-1/M_G$. For any real $s>0$, since each $\ell\leq R$ satisfies $1\leq q^{s(R-\ell)}$, we have
    \[|S_r|\leq q^{sR}\sum_{\ell=0}^Rb_\ell q^{-s\ell}\leq q^{sR}Z_\Pl(s), \quad s > 1 - \frac{1}{M_G}.\]
    By Theorem \ref{holomorphic continuation of F(s)} (2), for $s = 1-\frac{1}{M_G} + \delta$ with the number $\delta$ positive and near to $0$, we have 
    \[Z_\Pl(s) \ll \delta^{-(L+m)}\]
    and
    \[|S_r|\ll q^{(1-\frac{1}{M_G})R}q^{\delta R}\delta^{-(L+m)}.\]
    We may choose $\delta=\frac{1}{2M_G(1+R)}$ (so that $s <1$). Then
    \[|S_r|\ll(1+R)^{L+m}q^{(1-\frac{1}{M_G})R}.\]
    For each $\varepsilon > 0$, $(1+R)^{L+m} \leq C_\varepsilon q^{\varepsilon R}$ for some constant
    $C_\varepsilon$. Consider $\varepsilon = 1/(2M_G)$. Then
    \[|S_r| \ll q^{(1-1/(2M_G))R}.\]
\end{proof}

\section{The proof of Theorem \ref{Kloosterman sum-split reductive group}}

In this section, we firstly still assume that
$G$ is semisimple and simply connected.

Let $\{K_d\}_d$ be the family of principal congruence subgroups of $G$, that is, the kernel of the reduction map $G(\mathcal O)\to G(\mathcal O/\mathfrak p^d)$. Let $\{\Kl_d\}_d$ be the corresponding Kloosterman sums, that is,
\[\Kl_d(t)=\Kl(t;\psi_0,\dot{w_0},K_d),\quad t\in A.\]
We also use $X_d(\dot{w_0}t)$ to denote the Kloosterman set in this case.

For $t\in A$, write $t=t_0\lambda_t(\varpi)$ with $t_0\in A(\mathcal O)$ and $\lambda_t\in X_*(A)$. By D\k{a}browski--Reeder \cite[Theorem 0.3]{DR98}, $X_d(\dot w_0t)$ is nonempty only if $\lambda_t\in\mathbb Z_{\ge0}\Phi^{\vee,+}$. It therefore suffices to consider
$$t=t_0\prod_{i=1}^m\alpha_i^\vee(\varpi^{r_i}),\qquad t_0\in A(\mathcal O),\quad r_i\in\mathbb Z_{\ge0}.$$

\begin{thm}[Theorem 1.3 in \cite{CLYY26}] \label{main-thm-CLYY}
Let $G$ be a split semisimple and simply connected group. Assume there exist constants $\varepsilon>0$ and $C>0$ such that for any $r \in \BZ_{\geq 0}^m$
$$|S_r|\le C\, q^{(1-\varepsilon)R}, \quad R = \sum_i r_i.$$
Then for all sufficiently large $d$, the Kloosterman sum $\mathrm{Kl}_d$ has a nontrivial bound.

\end{thm}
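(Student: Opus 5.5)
The plan is Stevens' strategy: reduce to a counting problem for the Plücker-constrained sets $S_r$ by summing out the simple-root coordinates, on which the sum becomes a product of $\GL_2$-type Kloosterman sums. By the discussion preceding the statement, we may take $t=t_0\prod_i\alpha_i^\vee(\varpi^{r_i})$ with $t_0\in A(\CO)$ and $r\in\BZ_{\geq 0}^m$. Then $\delta(t)=q^{-\langle 2\rho,\sum_i r_i\alpha_i^\vee\rangle}=q^{-2R}$. So the goal is a bound $|\Kl_d(t)|\ll q^{(1-\varepsilon')R}$ for some $\varepsilon'>0$ and all $d\gg0$, with implied constants depending on $d$ and $p$.

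\emph{Parametrization.} Write $x=[n_1\dot{w_0}tn_2]\in X_d(\dot{w_0}t)$. By Bruhat uniqueness, $x$ is determined by the class of $n_2$ in $N/(N\cap K_d)$ together with $t$. The condition $n_1\dot{w_0}tn_2\in K_d\subset G(\CO)$ then imposes integrality conditions on $n_2$. Apply the $N$-invariant functional $\ell_i$ of Lemma \ref{lem:int-2} to $\pi_i(n_1\dot{w_0}tn_2)e_{ij}$. This gives
\[\ell_i(\pi_i(\dot{w_0}tn_2)e_{ij})=\omega_i(t)\,p_{ij}(n_2)\in\CO.\]
Since $|\omega_i(t)|=q^{-r_i}$, this is exactly $v(p_{ij}(n_2))\geq -r_i$, the Plücker constraint in Definition \ref{defn:plucker-constrained}. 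I would then use \cite[Lemma 3.13]{CLYY26} to normalize $n_2$ modulo $N(\CO)$ by the section $\CR$; the coordinates $q_\gamma$ play this role, and passing from $N(\CO)$ to $N\cap K_d$ only costs a factor depending on $d$. This splits $n_2$ into its simple-root coordinates $(x_{\alpha_i})_i$ and a remaining tuple $\mathbf y$. The goal of this step is a fibration of $X_d(\dot{w_0}t)$ whose base injects into $S_r$ (after translating the simple coordinates to $0$) and whose fibers are described by the simple coordinates.

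\emph{Fiber sums.} On each fiber the phase $\psi_0(u(x))\psi_0(u'(x))$ depends on $x_{\alpha_i}(n_2)$ and on $x_{\alpha_i}(n_1)$. For $d$ large, $x_{\alpha_i}(n_1)$ should be an explicit function of the form $(\text{unit})\cdot\varpi^{-r_i}/x_{\alpha_i}(n_2)$ plus a term depending only on the base point. This is the $\GL_2$-structure coming from the torus action $x\mapsto$ conjugation by $A(\CO)\cap K_d$, which scales each simple coordinate independently. Summing over the fiber should then factor as a product over $i$ of $\GL_2$ Kloosterman (or Salié) sums of modulus roughly $\varpi^{r_i}$. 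Weil's bound, or the elementary $p$-adic stationary-phase evaluation when $r_i\geq 2$, then bounds each fiber sum by $\ll_d q^{(R/2)(1+o(1))}$ times the fiber size normalized appropriately.

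\emph{Combining, and the main obstacle.} Summing over the base gives $|\Kl_d(t)|\ll(\text{fiber bound})\cdot|S_r|$. The hypothesis $|S_r|\le Cq^{(1-\varepsilon)R}$ then has to be balanced against the normalization $\delta^{1/2-\varepsilon'}(t)=q^{-(1-2\varepsilon')R}$, and I expect the exponent bookkeeping here to be the main obstacle. One must check that the fiber sums contribute exactly the saving that compensates the trivial count. Concretely, the trivial bound (Dąbrowski--Reeder) is of size $q^{R+o(R)}$, and the claim to verify is that it decomposes as $|S_r|$ times $q^{0\cdot R}$ after the Weil saving. In other words, the cancellation in the simple-root directions exactly absorbs the fiber volume, so that any power saving in $|S_r|$ transfers to $\Kl_d$ with some $\varepsilon'\asymp\varepsilon$. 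The other delicate point is making the factorization into $\GL_2$ sums honest. This needs $d$ large enough that the higher coordinates $\mathbf y$ do not feed back into the simple-root phases, and that is where the restriction to sufficiently large $d$ enters.
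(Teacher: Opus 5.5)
Your outline has a genuine gap. It sits where you flag it, but it starts earlier, in the fibration.

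\textbf{The fibration does not exist.} The Pl\"ucker constraints couple the simple and non-simple coordinates. So there is no fibration of $X_d(\dot w_0t)$ whose base injects into $S_r$ and whose fibres are independent ranges of the simple coordinates. Already for $\SL_3$, write $n=X_{\alpha_1}(x_1)X_{\alpha_2}(x_2)X_{\alpha_1+\alpha_2}(x_3)$. The Pl\"ucker coordinates are then $(1,x_1,x_1x_2+x_3)$ and $(1,x_2,-x_3)$, up to signs. Hence the admissible $x_3$ form $\fp^{-r_2}\cap(-x_1x_2+\fp^{-r_1})$. This set is empty unless $v(x_1x_2)\ge-\max(r_1,r_2)$, and setting $x_1=x_2=0$ need not land in $S_r$.

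\textbf{The fibre sums do not factor.} Beyond rank one, $x_{\alpha_i}(n_1)$ is not $(\text{unit})\cdot\varpi^{-r_i}/x_{\alpha_i}(n_2)$ plus a base-point term. It is governed by ratios of coordinates of $\pi_i(\dot w_0tn_2)$ that involve the non-simple coordinates. So the fibre sums do not split into $\GL_2$ sums of modulus $\varpi^{r_i}$.

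\textbf{The bookkeeping cannot close.} Even granting the factorization, a uniform saving to $q^{R/2}$ per fibre over a base of size $|S_r|\le Cq^{(1-\varepsilon)R}$ gives only $q^{(3/2-\varepsilon)R}$. That is worse than the trivial $q^{R+o(R)}$, whereas you need $q^{(1-2\varepsilon')R}$ because $\delta(t)=q^{-2R}$. Your hope that the fibre sums are $q^{o(R)}$ contradicts your own Weil estimate.

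\textbf{The missing idea} is Stevens' averaging over torus orbits only, with the saving graded by the actual size of the simple coordinates. This is what the paper's appendix does.
\begin{enumerate}
\item $A_d$ acts on $X_d(\dot w_0t)$ by $x\mapsto a\,x\,\dot w_0^{-1}a^{-1}\dot w_0$. Plain conjugation, as you propose, moves $t$. This action multiplies $\theta_i(x)$ by $\lambda_i$ and $\theta'_{\sigma(i)}(x)$ by $\lambda_i^{-1}$, for $\lambda$ in the image $H_{d,\ell}$ of the simple-root map.
\item That image is a proper subgroup of $U_{d,\ell}^m$ when $p\mid\det C_G$. So your claim that the torus scales each simple coordinate independently also fails. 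The paper instead uses $U_{D,\ell}^m\subseteq H_{d,\ell}$ with $D=d+v(\det C_G)$ and sums over cosets.
\item This writes $\Kl_d(t)$ as an average of products of $\GL_2$ sums over $(1+\fp^D)/(1+\fp^\ell)$. The Jacquet--Ye estimate then gives $|\Kl_d(t)|\le C_d\sum_y|X_d(\dot w_0t)_y|\,q^{\frac12\sum_iy_i}$, where $y=\tau(\theta'(x))$. The saving is measured by the valuations of the simple coordinates of $n_2$, not a uniform $q^{R/2}$. Large $d$ is needed only for the $p$-adic exponential and the Jacquet--Ye bound, not for any decoupling.
\item The remaining counting step, taken from \cite{CLYY26} and based on the homogeneity of Pl\"ucker coordinates, converts this $y$-graded sum into the counts $|S_r|$. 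There, large simple coordinates carry large fibres but also a large saving $q^{\frac12\sum_iy_i}$. They are balanced against the constraints they force on the other coordinates, as the $\SL_3$ emptiness condition above illustrates.
\end{enumerate}
Your outline has neither the $y$-graded estimate nor this coupling, so the hypothesis on $|S_r|$ never actually enters.
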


In fact, there is the condition that
$p \nmid \det(C_G)$ with $C_G$ the Cartan matrix
of $G$ in \cite[Theorem 1.3]{CLYY26}. This condition is from the decomposition of the Kloosterman sum on $G$ to products of Kloosterman sums on $\GL_2$ in \cite[Proposition 5.3]{CCL26}. 
We remove this condition in the Appendix (Theorem \ref{thm:general-stevens}).

\begin{cor}\label{thm-main} Let $G$ be a split semisimple and simply connected group. Then for all sufficiently large $d$, the Kloosterman sum $\Kl_d$ has a nontrivial bound
\end{cor}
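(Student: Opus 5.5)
The plan is to combine Theorem \ref{thm-bound-S_r} with Theorem \ref{main-thm-CLYY}.

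First, I would apply Theorem \ref{thm-bound-S_r}. It gives a constant $C>0$, depending only on $F$ and $G$, such that
\[|S_r|\le C\,q^{(1-\frac{1}{2M_G})R}, \qquad R=\sum_i r_i,\]
for every $r\in\BZ_{\geq0}^m$. This is exactly the hypothesis of Theorem \ref{main-thm-CLYY}, with $\varepsilon=\frac{1}{2M_G}>0$; recall that $M_G\ge1$ is finite. I would also check that this uniformity is the one Theorem \ref{main-thm-CLYY} requires. The implied constant in Theorem \ref{thm-bound-S_r} does not depend on $r$, because it comes only from the bound $Z_\Pl(s)\ll\delta^{-(L+m)}$ and from $(1+R)^{L+m}\le C_\varepsilon q^{\varepsilon R}$.

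Second, I would invoke Theorem \ref{main-thm-CLYY}. It yields some $d_0$ such that $\Kl_d$ has a nontrivial bound for all $d\ge d_0$.

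The main point needing care is a hypothesis mismatch. As stated in \cite{CLYY26}, Theorem 1.3 carries the extra assumption $p\nmid\det(C_G)$, which is used to reduce Stevens' orbit decomposition to products of $\GL_2$ Kloosterman sums via \cite[Proposition 5.3]{CCL26}. The paper removes this assumption in the Appendix (Theorem \ref{thm:general-stevens}). So I would argue as follows. Run the proof of \cite[Theorem 1.3]{CLYY26} with \cite[Proposition 5.3]{CCL26} replaced by Theorem \ref{thm:general-stevens}. The rest of that proof reduces to the counting of $S_r$, applies the Weil bound orbitwise, and uses the homogeneity of Plücker coordinates; none of these steps depends on $p$. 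The corollary then holds for all residue characteristics.

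Finally, I would make sure the Kloosterman sum in the conclusion is the one in the main theorem. Since $\psi_0^{-1}=\psi_{c}$ with $c=(-1,\dots,-1)$, and this differs from $\psi_0$ by conjugation by an element of $A(\CO)$, the sums $\Kl_d(\cdot;\psi_0,\ldots)$ and $\Kl_d(\cdot;\psi_0^{-1},\ldots)$ agree up to a translate of $t$ by $A(\CO)$. This translation does not change $\delta(t)$. The nontrivial bound therefore holds for either normalization.
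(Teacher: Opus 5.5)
Your argument is correct and is exactly the paper's: Theorem \ref{thm-bound-S_r} supplies the hypothesis of Theorem \ref{main-thm-CLYY} with $\varepsilon=1/(2M_G)$, and the $p\nmid\det(C_G)$ restriction is removed via the Appendix (Theorem \ref{thm:general-stevens}), just as you propose. One correction to your last paragraph, which the corollary does not need: for simply connected $G$ the characters $\psi_0$ and $\psi_0^{-1}$ need not be conjugate under $A(\CO)$ (for $\SL_2$ this would require $-1$ to be a square in $F$), but switching between $\psi_0$ and $\psi_0^{-1}$ is still harmless, since $\Kl(t;\psi_0^{-1},\dot{w_0},K)=\overline{\Kl(t;\psi_0,\dot{w_0},K)}$ has the same absolute value.
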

\begin{proof}
    This is an immediate Corollary from Theorem
    \ref{thm-bound-S_r} and Theorem \ref{main-thm-CLYY}.
\end{proof}

Now, we let $G$ be a general split reductive group over $F$. As before, we fix a Borel subgroup $B =AN$ of $G$ and $\psi = \psi_0$ is the generic character on $N$ given before \cite[Definition 2.1]{CCL26}. Denote by $\{G_i\}_{1 \leq i \leq s}$ the set of minimal nontrivial normal smooth connected subgroups of $G^\der$. By \cite[Theorem in Section 27.5]{H80}, the groups $G_i$ are simple and the multiplication homomorphism
\[G_1 \times \cdots \times G_s \lra G^\der\]
is an isogeny map. These groups $G_i$ are called simple factors of $G$. We have the following isogeny map
\[\wt{G_1}\times \cdots \times\wt{G_s}\lra G^\der\]
where $\wt{G_i},1\leq i\leq s$ is the universal cover of the simple factor $G_i$ of $G^\der$. For each $1\leq i\leq s$, the group $\wt{G_i}$ is simple, simply connected group. 

An open compact subgroup $K\subset G$ with $\psi_0|_{N \cap K} = 1$ is called admissible (with aspect to $\psi_0$) if for any $z \in Z^\der$ and $n \in N$ with $zn \in K$, we have $z = 1$ and $n \in K$, where $Z^\der$ is the center of $G^\der$ (\cite[Definition 2.25]{CCL26}).

\begin{prop}\label{Kloosterman sum for admissible group of split group}
    Let $G$ be a split reductive group. Then there exists an admissible group $K_G$ such that the Kloosterman sum $\Kl(\cdot,\psi_0^{-1},\dot{w_0},K_G)$ has a nontrivial bound.
    \begin{proof}
        By Corollary \ref{thm-main}, for every $1\leq i\leq s$, there exists $\varepsilon_i>0,1\leq i\leq s$ such that the Kloosterman sum $\Kl_{\wt{G_i}}(t_i,\psi_0^{-1}|_{N_i},\dot{w_{0,i}},K_{d_i})$ has a nontrivial bound.
        
        By Lemma \cite[Lemma 3.23 (1)]{CCL26}, the product of $K_{d_i}$ gives an admissible group $K_{G^\der}$ of $G^\der$. By Lemma \cite[Lemma 3.20]{CCL26} and Lemma \cite[Lemma 3.23]{CCL26}, for a representatives $\dot{w_0}$ of the longest Weyl element in $W$, the Kloosterman sums $\Kl_{G^\der}(\cdot;\psi_0^{-1},\dot{w_0},K_{G^\der})$ has a nontrivial bound. Here, we use the fact that the modular characters are compatible under isogeny maps. 
        
        By Lemma \cite[Lemma 3.22]{CCL26}, there exists an admissible group $K_{G}$ of $G$ and a representative $\dot{w_0}$ of the longest Weyl element in $W$ such that the Kloosterman sums $\Kl_{G}(\cdot;\psi_0^{-1},\dot{w_0},K_{G})$ has a nontrivial bound. By \cite[Lemma 3.21 (2)]{CCL26}, the above bound holds for any representative of longest Weyl element $w_0\in W$.
    \end{proof}
\end{prop}
\begin{proof}[The proof of Theorem \ref{Kloosterman sum-split reductive group} ]
    Now let $K\subset G$ be a compact open subgroup such that
    $\psi_0|_{N\cap K}=1$ and $w^0$ is the relevant representative of $w_0$ considered \cite{CCL26}. Consider the Kloosterman integral $I(t,1_{w^0K})$. By \cite[Lemma 3.19]{CCL26}, we have
        \[I(t,1_{w^0K})\stackrel{.}{=}\mathrm{Kl}((w^0)^{-2}t; \psi_0^{-1},w^0,K), \quad t \in A.\]

    Apply \cite[Theorem 2.12]{CCL26}, we have the following germ expansion of Kloosterman integrals  
    \[I(t,1_{w^0K}) =\sum_{J\subset\Delta}(K_\emptyset^{J}*\omega_J)(t),\quad t\in A,\]
    where $\omega_J\in\CS(A_J)$ for every $J\subset\Delta$ and $\{K_I^J\}_{I\subset J}$ is a system of Shalika germs.
    
    By \cite[Proposition 2.29]{CCL26}, for each $J$, we can choose the germ function $K_\emptyset^J$ such that
    \[K_\emptyset^{J}(t)\stackrel{.}{=} \Kl_{M_J}((w_J^0)^{-2}t,\psi_0^{-1}|_{N_{M_J}},w_J^0,K_{M_J}),\quad t \in A_\emptyset^J,\]
    where $K_{M_J}$ is the admissible group in Proposition \ref{Kloosterman sum for admissible group of split group}. In particular, $K_\emptyset^J$ is  supported on $A_\emptyset^J(r_J)$ for some $r_J>0$. 

    By the above germ expansion and \cite[Proposition 3.16]{CCL26},
    there exists constant $C$ such that
    \[\Big|I(t,1_{w^0K})\delta^{\frac{1}{2}-\varepsilon}(t)\Big|\leq C,\quad t\in A\]
    holds provided that for every $J\subset\Delta$, for any $r>0$ small enough, there exist some constants $\varepsilon > 0$ and $C_J>0$, such that
    \[|K_\emptyset^J(t)\delta_{B_{M_J}}^{\frac{1}{2}-\varepsilon}(t)|\leq C_J,\quad t\in A_\emptyset^J(r).\]
    
    By Proposition \ref{Kloosterman sum for admissible group of split group}, such a nontrivial bound holds for $K_\emptyset^J(\cdot)=\Kl_{M_J}((w_J^0)^{-2}\cdot,\psi_0^{-1}|_{N_{M_J}},w_J^0,K_{M_J})$. The proof is done.
\end{proof}

\appendix

\section{Generalized Stevens' trick}
We follow the section 3.2 in \cite{CLYY26}. We still consider
$$t=t_0\lambda_t(\varpi),\qquad t_0\in A(\mathcal O),\quad \lambda_t=\sum_{i=1}^m r_i\alpha_i^\vee,\quad r_i\in\mathbb Z_{\ge0}.$$

Using the fixed ordering and root subgroup parameterizations, write every $n\in N$ uniquely as
$$n(\mathbf x)=\prod_{\gamma\in\Phi^+}X_\gamma(x_\gamma),\qquad \mathbf x=(x_\gamma)_{\gamma\in\Phi^+}\in F^{\Phi^+}.$$
For $n\in N$, write $n_\gamma$ for its $\gamma$-coordinate. Recall that $N_d=\prod_{\gamma\in\Phi^+}X_\gamma(\mathfrak p^d)$ for $d\ge0$. For a positive integer $\ell$, put
$$N_{-\ell}:=\prod_{\gamma\in\Phi^+}X_\gamma\bigl(\mathfrak p^{-\ell\operatorname{ht}(\gamma)}\bigr),$$
where $\operatorname{ht}(\gamma)=\sum_{\alpha\in\Delta}\kappa_\alpha(\gamma)$ for $\gamma=\sum_{\alpha\in\Delta}\kappa_\alpha(\gamma)\alpha$.

Put $A_d=A\cap K_d$, $N_d=N\cap K_d$, and for $d\ge1$ we have
$$A_d=\{t\in A:\chi(t)\in1+\mathfrak p^d\text{ for all }\chi\in X^*(A)\}.$$
Moreover, $\bigcap_{d\ge1}K_d=\{1\}$ and $N_d=\prod_{\gamma\in\Phi^+}X_\gamma(\mathfrak p^d)$ for every $d\ge0$.

Choose $\ell>d$ such that for every $x\in X_d(\dot w_0 t)$ we have $u(x)\in N_d\backslash N_{-\ell}$ and $u'(x)\in N_{-\ell}/N_d$.
For $x=[n_1\dot w_0 t n_2]\in X_d(\dot w_0 t)$, let
$$\theta_i(x)=(n_1)_{\alpha_i}\in\mathfrak p^{-\ell}/\mathfrak p^d,\qquad\theta_i'(x)=(n_2)_{\alpha_i}\in\mathfrak p^{-\ell}/\mathfrak p^d.$$
Write $\theta'=(\theta_i')_{1\le i\le m}:X_d(\dot w_0 t)\to (\mathfrak p^{-\ell}/\mathfrak p^d)^m$ and set $\tau(x)=\min\{v(x),d\}$. 
For $y\in[-\ell,d]^m$, let $X_d(\dot w_0 t)_y$ be the fibre over $y$ under the composition
$$X_d(\dot w_0 t)\xrightarrow{\theta'}(\mathfrak p^{-\ell}/\mathfrak p^d)^m
\xrightarrow{\tau}[-\ell,d]^m.$$

In this section, we prove the following result
\begin{thm}\label{main-thm-CLYY 2}
Let $G$ be a split semisimple and simply connected group. Assume there exist constants $\varepsilon>0$ and $C>0$ such that 
$$|S_r|\le C\, q^{(1-\varepsilon)\sum_{i=1}^m r_i}, \quad r \in \BZ_{\geq 0}^m.$$
Then for all sufficiently large $d$, the Kloosterman sum $\mathrm{Kl}_d$ has a nontrivial bound.
\end{thm}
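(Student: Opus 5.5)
The plan is to follow the argument of \cite[Section 3.2]{CLYY26}, which proves this statement under the extra hypothesis $p\nmid\det(C_G)$. That hypothesis is used at only one point: the Stevens-type factorization of the contribution of each $A$-orbit into a product of $\GL_2$ Kloosterman sums (\cite[Proposition 5.3]{CCL26}). So I would rebuild exactly that step without the hypothesis and leave the rest of the argument as it is.

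\textbf{Step 1: the torus action and its restriction.} Consider the action of $A(\CO)$ on $X_d(\dot w_0t)$ given by
\[
s\cdot[n_1\dot w_0tn_2]=[\,s n_1 s^{-1}\,\dot w_0 t\,(w_0^{-1}s w_0)^{-1}\cdots\,].
\]
Explicitly, conjugation by $s\in A(\CO)$ sends $n_1\dot w_0tn_2$ to $(sn_1s^{-1})\,\dot w_0 t\,(s'n_2s'^{-1})$ for the appropriate $w_0$-twisted element $s'$. Under this action $\theta_i$ and $\theta_i'$ transform through the characters $\alpha_i$ and $-w_0\alpha_i$. On each fibre $X_d(\dot w_0t)_y$ the orbit sum of $\psi_0(u)\psi_0(u')$ is then an exponential sum over the image of $A(\CO)/A_d$ in $\prod_i(\CO/\fp^{d})^\times$ under $s\mapsto(\alpha_i(s))_i$.

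When $p\nmid\det(C_G)$ this image is the whole product, and the sum factors into $\GL_2$ Kloosterman sums. In general I would instead use the following fact: since $G$ is simply connected, the $\alpha_i^\vee$ form a basis of $X_*(A)$, and the map $A(\CO)\to(\CO^\times)^m$, $s\mapsto(\alpha_i(s))$, has image of finite index bounded in terms of $\det(C_G)$ alone. On the pro-$p$ part $1+\fp^{e}$, with $e$ depending only on $G$ and $F$, the image is the full product $(1+\fp^e)^m$. Indeed $\alpha_i(\alpha_j^\vee(1+x))=(1+x)^{\langle\alpha_i,\alpha_j^\vee\rangle}$, and raising to an integer power $k\ne0$ is surjective onto $1+\fp^{e}$ for $e$ large relative to $v(k)$.

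\textbf{Step 2: factorization over the subgroup.} Restrict to the subgroup $A^{(e)}=\{s:\alpha_i(s)\in 1+\fp^{e}\}$ and decompose $X_d(\dot w_0t)_y$ into $A^{(e)}$-orbits. Each orbit sum becomes a product over $i$ of incomplete Kloosterman-type sums
\[
\sum_{u\in(1+\fp^e)/(1+\fp^{d'})}\psi_F\bigl(a_iu+b_iu^{-1}\bigr).
\]
These are sums over a coset of a congruence subgroup. Splitting them over the at most $[\CO^\times:1+\fp^e]$ cosets, or evaluating them directly by the $p$-adic stationary phase method, gives a bound of the Weil/Salié type $\ll_e q^{d'/2}\gcd(\cdot)^{1/2}$. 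The constant depends only on $e$, hence only on $G$ and $F$.

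This is the step I expect to be the main obstacle: proving square-root cancellation for $\GL_2$ Kloosterman sums restricted to $1+\fp^e$, uniformly in the valuations of $a_i$ and $b_i$. My first attempt would be to write $u=1+\varpi^e w$ and treat the sum as a $p$-adic exponential sum in $w$, using the standard stationary-phase evaluation for $d'$ large. For $d'\le 2e$ the bound is trivial with a constant depending only on $e$.

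\textbf{Step 3: counting and conclusion.} Bounding each fibre then reduces, exactly as in \cite{CLYY26}, to counting the orbits times the orbit size. The number of orbits over all fibres is controlled by $|S_r|$ through the same height argument as before. The hypothesis $|S_r|\le Cq^{(1-\varepsilon)R}$ combines with the square-root saving from the $\GL_2$ sums, and the bounded index loses only a constant factor. Together these give $|\Kl_d(t)|\delta^{1/2-\varepsilon'}(t)\ll 1$ for $d$ large, which is the nontrivial bound claimed.
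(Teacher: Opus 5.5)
Your architecture is the paper's. You average over a torus acting on $X_d(\dot w_0t)$, find a product subgroup in the image of the simple-root map, factor into $\GL_2$ sums over $1+\fp^r$, bound those, and defer the counting to \cite{CLYY26}. But Steps 1--2 average over the wrong group, so as written they do not give a valid identity.

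\textbf{The group $A(\CO)$ does not act.}
\begin{itemize}
\item Conjugation by $s$ sends $n_1\dot w_0tn_2$ to $(sn_1s^{-1})\,\dot w_0\,t\,s's^{-1}\,(sn_2s^{-1})$, where $s'=\dot w_0^{-1}s\dot w_0$. This changes $t$.
\item The twisted map $x\mapsto sxs'^{-1}$ keeps $t$. However, it equals $(sxs^{-1})(ss'^{-1})$, so it preserves $K_d$ only when $ss'^{-1}\in A_d$. For $\SL_2$ and $s=\mathrm{diag}(a,a^{-1})$ this means $a^2\in 1+\fp^d$.
\item So the acting group is essentially $A_d=A\cap K_d$. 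Since $\theta_i,\theta_i'\in\fp^{-\ell}/\fp^d$, the orbit sums run over the image $H_{d,\ell}$ of $A_d$ in $\bigl((1+\fp^d)/(1+\fp^\ell)\bigr)^m$. They do not run over the image of $A(\CO)/A_d$ in $\prod_i(\CO/\fp^d)^\times$.
\item Consequently your $A^{(e)}$, with $e$ depending only on $G$ and $F$, does not act once $d>e$. The ``bounded index'' you lose is measured in the wrong group.
\end{itemize}

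\textbf{The repair, which is the paper's argument.} The depth must be $D=d+v(\det C_G)$, not something independent of $d$.
\begin{itemize}
\item Writing $s=\prod_j\alpha_j^\vee(a_j)$, the simple-root map is the Cartan matrix acting multiplicatively. Your coordinatewise power-map argument is therefore not enough.
\item What you need is $\det(C_G)\BZ^m\subseteq\mathrm{im}(f)$, which follows from the adjugate. Tensoring with $\fp^d$ and applying $\exp$ gives $U_{D,\ell}^m\subseteq H_{d,\ell}$.
\item Now average over $\{s\in A_d:\alpha_i(s)\in 1+\fp^D\}$, whose image is exactly $U_{D,\ell}^m$. This gives directly
$\Kl_d(t)=q^{-m(\ell-D)}\sum_x\prod_i\Kl(\varpi^\ell\theta_i(x),\varpi^\ell\theta'_{\sigma(i)}(x);D,\ell)$.
\item This version of your restriction idea even avoids the coset representatives $\mathscr R$ that the paper uses. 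The constants now depend on $d$, which the statement allows.
\end{itemize}

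\textbf{The step you flag as the main obstacle.} The paper simply cites \cite[Lemmas 3.2--3.4]{JY99}: for $r$ large and a fixed $k$ with $q^{-k}\le|2|^2$,
$|\Kl(a,b;r,\ell)|\le q^{(\min\{v(b),\ell-2r-k\}+k+\ell)/2}$.
This bound is one-sided in $v(b)$. That suffices because the fibres are indexed by $\tau(\theta')$ only. Also note that splitting a sum over $\CO^\times$ into cosets of $1+\fp^e$ cannot bound a single coset sum. For that you need stationary phase, or completion by multiplicative characters.
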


Following the proof of Theorem \ref{main-thm-CLYY}
in \cite{CLYY26}, we reduce the above theorem
to the following proposition.
\begin{prop}
    For $d>0$ sufficiently large, there exists an integer $\ell>0$, depending on $d$ and $t$, such that
    $$|\mathrm{Kl}_d(t)|\le C_d\sum_{y\in[-\ell,d]^m}|X_d(\dot w_0 t)_y|\,q^{\frac12\sum_{i=1}^m y_i},$$
    where $C_d$ is a constant depend only on $F,G,d$.
\end{prop}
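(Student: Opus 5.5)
We follow Stevens' approach: let the torus $A_0 := A(\CO)$ act on the Kloosterman set and decompose $\Kl_d(t)$ into orbit contributions. Each orbit contribution is then rewritten as a product of $\GL_2$-type (exponential) sums in the simple-root coordinates, to which the Weil bound applies.

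\emph{Step 1: the torus action.} For $a\in A(\CO)$ and $x=[n_1\dot w_0 t n_2]$, set
\[a\cdot x=[a n_1 a^{-1}\,\dot w_0 t\,(\dot w_0 t)^{-1}a(\dot w_0 t)\, a^{-1} n_2 a \cdots]\]
in the standard form $a\,(n_1\dot w_0 t n_2)\,a'^{-1}$ with $a'=(\dot w_0t)^{-1}a(\dot w_0 t)=w_0^{-1}(a)$. This preserves $K_d$-double cosets because $A(\CO)$ normalizes $K_d$ and $N_d$. It sends $\theta_i(x)\mapsto\alpha_i(a)\theta_i(x)$ and $\theta_i'(x)\mapsto\alpha_i(w_0^{-1}a)^{-1}\theta'_i(x)$, so the fibres $X_d(\dot w_0t)_y$ are stable. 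The stabilizer of a point contains $A_d$ up to a finite index bounded in terms of $d$, and this is where a constant $C_d$ enters.

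\emph{Step 2: orbit sums.} For an orbit $\mathcal O_x$, we get
\[\sum_{x'\in\mathcal O_x}\psi_0(u(x'))\psi_0(u'(x')) \approx \frac{1}{[A(\CO):\mathrm{Stab}]}\sum_{a\in A(\CO)/A_d}\psi_F\Bigl(\sum_i \alpha_i(a)\theta_i+\alpha_{i^*}(a)^{-1}\theta'_{i^*}\Bigr).\]
Here $i\mapsto i^*$ is the involution $-w_0$ on simple roots.

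In \cite{CLYY26} the hypothesis $p\nmid\det(C_G)$ was used so that $a\mapsto(\alpha_i(a))_i$ is surjective $A(\CO)\to(\CO^\times)^m$. Since $G$ is simply connected, $A(\CO)\cong(\CO^\times)^m$ via the coroots, and the map $A(\CO)\to(\CO^\times)^m$, $a\mapsto(\alpha_i(a))_i$, is given by the Cartan matrix. Its image is therefore a finite-index open subgroup $U$ containing $(1+\fp^{k})^m$, where $k$ depends only on $G$ and $F$.

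This is the main obstacle. We handle it by decomposing $(\CO^\times)^m/U$ into cosets $\eta U$ and, on each coset, expanding $1_{U}$ into characters of the finite group $(\CO^\times)^m/U$. The number of characters depends only on $G$ and $F$, not on $t$, so it can be absorbed into $C_d$. Concretely, $1_U=\frac{1}{|\widehat{\,\cdot\,}|}\sum_\chi\chi$, where $\chi$ runs over characters trivial on $U$. The orbit sum thereby becomes a bounded linear combination of products
\[\prod_{i}\;\sum_{u\in\CO^\times/(1+\fp^{d})}\chi_i(u)\,\psi_F\bigl(u\theta_i+u^{-1}\theta'_{i}\bigr),\]
with the pairing of indices adjusted through $i^*$. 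These are twisted $\GL_2$ Kloosterman (Salié-type) sums modulo $\fp^{d+\text{const}}$, with bounded-conductor characters.

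\emph{Step 3: Weil bound and counting.} For twisted Kloosterman sums of modulus $\fp^{c}$ we use the Weil bound for $c=1$ and the elementary stationary-phase bound for $c\ge2$. Both give $\ll_d q^{c/2}$, uniformly in characters of bounded conductor, and they give zero or a trivially bounded quantity when the valuations of $\theta_i,\theta'_i$ are unbalanced. With $y_i=\tau(\theta'_i)$, the $i$-th factor is $\ll_d q^{(d-y_i)/2}\cdot(\text{orbit normalization})$. Dividing by the orbit size $\asymp q^{\sum_i(d-y_i)}$ (up to $C_d$) shows that each point of the fibre $X_d(\dot w_0t)_y$ contributes $\ll_d q^{\frac12\sum_i y_i}$ after renormalization with the $q^{-d}$ factors absorbed into $C_d$. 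Summing over orbits in each fibre, and then over $y\in[-\ell,d]^m$, gives the claimed inequality. The choice of $\ell$ comes from the boundedness of $u(x),u'(x)$ noted before the statement.
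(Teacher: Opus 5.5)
There is a genuine gap in Step 1: $A(\CO)$ does not act on $X_d(\dot w_0 t)$. The map $g\mapsto a g a'^{-1}$ with $a'=\dot w_0^{-1}a\dot w_0$ preserves the cell $N\dot w_0 tN$ and is well defined on $N_d$-double cosets. However, $a g a'^{-1}=(aga^{-1})(aa'^{-1})$, and $aa'^{-1}\in A(\CO)$ lies in $K_d$ only if it lies in $A_d$. That $A(\CO)$ normalizes $K_d$ does not help, because $a\neq a'$. Already for $\SL_2$, $a=\mathrm{diag}(u,u^{-1})$ gives $aa'^{-1}=\mathrm{diag}(u^2,u^{-2})$, so only those $u$ with $u^2\in 1+\fp^d$ are admissible. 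Plain conjugation by $a$ does preserve $K_d$, but it maps $X_d(\dot w_0t)$ onto $X_d(\dot w_0\, t a'a^{-1})$, which is a different Kloosterman set. So, up to bounded index, the group you may use is $A_d$. The non-surjectivity you must handle is therefore that of $a\mapsto(\alpha_i(a))_i$ from $A_d$ to $U^m_{d,\ell}$, whose image $H_{d,\ell}$ only contains $U^m_{D,\ell}$ with $D=d+v(\det C_G)$. This is what the paper's appendix does; the issue does not live at the level of $A(\CO)\to(\CO^\times)^m$.

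Your orbit bookkeeping is also inconsistent, and the modulus is wrong. If stabilizers contained $A_d$ up to index bounded in $d$, every orbit would have size $O_d(1)$, and no saving growing with $t$ would be possible. That contradicts the orbit size $q^{\sum_i(d-y_i)}$ you divide by in Step 3. In fact $a\in A_d$ fixes $\theta_i(x)\in\fp^{-\ell}/\fp^d$ only if $\alpha_i(a)\in 1+\fp^{d-v(\theta_i(x))}$. Hence orbits grow as the valuations of $\theta_i,\theta'_i$ decrease, and the orbit sums are exponential sums of modulus $\fp^{\ell}$ with $\ell$ depending on $t$, not $\fp^{d+\mathrm{const}}$. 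Indeed, your sums over $\CO^\times/(1+\fp^d)$ are not even well defined once $v(\theta_i)<-d$. The paper's argument needs three ingredients:
\begin{enumerate}
\item the averaged identity $\Kl_d(t)=|H_{d,\ell}|^{-1}\sum_x\sum_{\lambda\in H_{d,\ell}}\psi_F\bigl(\sum_i\theta_i(x)\lambda_i+\theta'_{\sigma(i)}(x)\lambda_i^{-1}\bigr)$;
\item a splitting of $H_{d,\ell}$ into at most $q^{m v(\det C_G)}$ cosets of $U^m_{D,\ell}$;
\item a square-root bound for $\Kl(a,b;D,\ell)$ that is uniform in $\ell$, namely $q^{(\min\{v(b),\ell-2D-k\}+k+\ell)/2}$ from \cite[Lemmas 3.2--3.4]{JY99}.
\end{enumerate}
The factor $q^{\ell}$ per coordinate then cancels against $|H_{d,\ell}|^{-1}\le q^{-m(\ell-D)}$, leaving at most $q^{\tau(\theta'_{\sigma(i)}(x))/2}$ per coordinate of each point. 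Once $A(\CO)$ is replaced by $A_d$, your character expansion over $U^m_{d,\ell}/H_{d,\ell}$ is a workable substitute for the coset decomposition. Those characters are trivial on $U^m_{D,\ell}$, so each twisted sum splits back into sums $\Kl(\cdot,\cdot;D,\ell)$. The $t$-dependent modulus and the uniformity in $\ell$ must still be handled as above.
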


Assume throughout that $d$ is large enough so that the $p$-adic exponential and logarithm give isomorphisms between \(\mathfrak p^r\) and \(1+\mathfrak p^r\) for all \(r\ge d\). Let
\[
f:X_*(A)\to \mathbb Z^m,\qquad f(\lambda)=\bigl(\langle \alpha_1,\lambda\rangle,\dots,\langle \alpha_m,\lambda\rangle\bigr).
\]
The size of the finite abelian group \(\coker(f)\) is the determinant of the Cartan matrix $c_C:=\det(C_G)$ of $G$, and set
\[
E_C:=v(c_C),\qquad D:=d+E_C.
\]
For \(r\ge 1\), put
\[
U_{r,\ell}:=\frac{1+\mathfrak p^r}{1+\mathfrak p^\ell},\qquad 
U_{r,\ell}^{m}:=(U_{r,\ell})^{\times m}.
\]
The image of the simple-root map
\[\phi_d\colon A_d\to(1+\fp^d)^m,\quad t\mapsto(\alpha_1(t),\cdots,\alpha_m(t))\]
is
\[
H_{d,\ell}:=\operatorname{im}\bigl(A_d\to U_{d,\ell}^{m}\bigr).
\]

For each \(x\in X_d(\dot{w_0}t)\), define the constrained Kloosterman sum attached to \(x\) by
\[
S_H(x):=
\sum_{\lambda=(\lambda_1,\dots,\lambda_m)\in H_{d,\ell}}
\psi_F\!\left(
\sum_{i=1}^m
\bigl(\theta_i(x)\lambda_i+\theta_{\sigma(i)}'(x)\lambda_i^{-1}\bigr)
\right),\]
where \(\sigma\) is the permutation of \(\{1,\dots,m\}\)
defined by \(-w_0\alpha_i=\alpha_{\sigma(i)}\).

\begin{prop}\label{prop:avg-general}
We have,
\[
\Kl_d(t)=\frac{1}{|H_{d,\ell}|}\sum_{x\in X_d(\dot{w_0}t)}S_H(x).
\]
\end{prop}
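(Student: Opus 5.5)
The plan is to average $\Kl_d(t)$ over the action of the torus $A_d$ on the Kloosterman set, as in Stevens' method. The group $A_d = A\cap K_d$ normalizes $N\cap K_d = N_d$. It also commutes with $t$, and $\dot w_0^{-1} a\dot w_0\in A_d$ for $a\in A_d$. Hence $A_d$ acts on $X_d(\dot w_0 t)$ by
\[
a\cdot[n_1\dot w_0 t n_2] = [a n_1 a^{-1}\,\dot w_0 t\, (\dot w_0^{-1}a\dot w_0) n_2 (\dot w_0^{-1}a\dot w_0)^{-1}],
\]
since $a\,n_1\dot w_0 t n_2\,a^{-1}\in K_d$ and is again in $N\dot w_0 tN$. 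Because this action is a bijection of $X_d(\dot w_0 t)$, we have
\[
\Kl_d(t)=\sum_{x}\psi(x)=\frac{1}{|A_d/A'|}\sum_{a\in A_d/A'}\sum_x \psi(a\cdot x)
\]
for any subgroup $A'$ acting trivially on the summand. The two steps are to compute $\psi(a\cdot x)$ explicitly, and to pass from $A_d$ to its image $H_{d,\ell}$.

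For the first step, only the simple-root coordinates matter. Conjugation by $a$ multiplies $(n_1)_{\alpha_i}$ by $\alpha_i(a)$. Conjugation by $a':=\dot w_0^{-1}a\dot w_0$ multiplies $(n_2)_{\alpha_j}$ by $\alpha_j(a')=(w_0\alpha_j)(a)=\alpha_{\sigma(j)}(a)^{-1}$, using $-w_0\alpha_j=\alpha_{\sigma(j)}$. Reindexing $j=\sigma(i)$ and writing $\lambda_i=\alpha_i(a)$, the phase $\psi_0^{-1}(u(a x))\psi_0^{-1}(u'(ax))$ equals $\psi_F$ of $\sum_i(\theta_i(x)\lambda_i+\theta'_{\sigma(i)}(x)\lambda_i^{-1})$, possibly up to the global sign convention for $\psi_0^{\pm1}$. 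This is exactly the summand of $S_H(x)$.

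For the second step, $\theta_i,\theta_i'$ lie in $\fp^{-\ell}/\fp^d$. For $\lambda\in1+\fp^\ell$, the difference $\theta\lambda-\theta\in\fp^{-\ell}\fp^{\ell}=\CO$, and likewise $\theta\lambda^{-1}-\theta\in\CO$, so $\psi_F$ is unaffected. Therefore the summand depends only on the image of $a$ in $U^m_{d,\ell}$ under $\phi_d$, that is, only on an element of $H_{d,\ell}$. Consequently the sum over $A_d$ modulo $\ker(A_d\to U^m_{d,\ell})$ becomes a sum over $H_{d,\ell}$. This kernel has finite index because $H_{d,\ell}$ is finite.

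Averaging $\sum_x\psi(x)=\sum_x\psi(\lambda\cdot x)$ over $\lambda\in H_{d,\ell}$ and interchanging the two sums then gives
\[
\Kl_d(t)=|H_{d,\ell}|^{-1}\sum_x S_H(x).
\]
The main obstacle is the well-definedness of this bookkeeping. One must check that $\theta_i(x),\theta_i'(x)$ are well defined modulo $\fp^d$ on double cosets, which holds because $N_d$ changes simple coordinates additively by $\fp^d$. One must also verify the sign and $\sigma$ convention in $\alpha_j(\dot w_0^{-1}a\dot w_0)$. Finally, $H_{d,\ell}$ need not be all of $U^m_{d,\ell}$ when $p\mid\det C_G$, which is exactly why the sum is over the image $H_{d,\ell}$ rather than the full product.
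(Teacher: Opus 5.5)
Your proposal is correct and follows the paper's own argument: average over the $A_d$-action on $X_d(\dot w_0 t)$, observe that the phase depends only on the image of $a$ in $H_{d,\ell}$ (the paper states exactly this and defers the remaining bookkeeping to \cite[Proposition 5.3]{CCL26}), and interchange the sums. One small slip: the element realizing the action is $a\,g\,(\dot w_0^{-1}a\dot w_0)^{-1}$, as your displayed formula correctly gives, not the conjugate $aga^{-1}$, which lies in $N\dot w_0 t\,(\dot w_0^{-1}a\dot w_0)a^{-1}N$ rather than $N\dot w_0 tN$; also, since $\Kl_d$ is defined with $\psi_0$, there is no sign ambiguity.
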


\begin{proof}
For fixed \(x\), the function \(t\mapsto \psi(u(t\cdot x))\psi(u'(t\cdot x))\) is constant on cosets of 
\(A_{d,\ell}:=\ker(A_d\to H_{d,\ell})\). Therefore the orbit integral over \(A_d\) 
can be computed by summing over \(H_{d,\ell}\); the rest of the proof is identical 
to that of \cite[Proposition 5.3]{CCL26}. The details are omitted.
\end{proof}

We now prove that \(H_{d,\ell}\) always contains a product subgroup of depth \(D\).

\begin{prop}\label{prop:deep-subgroup}
If \(\ell>D\), then \(U_{D,\ell}^{m}\subseteq H_{d,\ell}\).
\end{prop}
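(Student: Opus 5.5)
The plan is to linearize the simple-root map $\phi_d$ with the $p$-adic logarithm and exponential, which reduces the statement to a fact about the integer matrix of $f$, namely that it has a cofactor inverse. Once we know $(1+\fp^D)^m\subseteq\phi_d(A_d)$ inside $(1+\fp^d)^m$, reducing modulo $(1+\fp^\ell)^m$ gives $U_{D,\ell}^m\subseteq H_{d,\ell}$. The hypothesis $\ell>D$ is only needed so that $U_{D,\ell}$ is a meaningful (nontrivial) subgroup of $U_{d,\ell}$. Note that $D=d+E_C\ge d$, so $\exp$ and $\log$ are available on $\fp^D$ by our standing assumption on $d$.

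\textbf{Step 1: coordinates on $A_d$.} Fix a $\BZ$-basis $\lambda_1,\dots,\lambda_m$ of $X_*(A)$. Since $A$ is split, we have $A\cong X_*(A)\otimes F^\times$, so every $t\in A$ is uniquely $t=\prod_j\lambda_j(t_j)$ with $t_j\in F^\times$. Let $\chi_1,\dots,\chi_m$ be the dual basis of $X^*(A)$. Then $\chi_k(t)=t_k$, and every $\chi\in X^*(A)$ is an integral combination of the $\chi_k$. By the description of $A_d$ recalled above, it follows that $t\in A_d$ if and only if every $t_j\in 1+\fp^d$. Writing $t_j=\exp(y_j)$ with $y_j\in\fp^d$ identifies $A_d\cong(\fp^d)^m$.

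\textbf{Step 2: the simple-root map is linear in these coordinates.} For each $i$,
\[
\alpha_i(t)=\prod_j t_j^{\langle\alpha_i,\lambda_j\rangle}=\exp\Bigl(\sum_j\langle\alpha_i,\lambda_j\rangle\,y_j\Bigr).
\]
Let $P=(\langle\alpha_i,\lambda_j\rangle)_{i,j}$. This is the integer matrix of $f$ in the chosen basis, so $\log\circ\phi_d\circ\exp$ is the linear map $y\mapsto Py$ on $(\fp^d)^m$. Here $|\det P|=|\coker(f)|$ is the determinant of the Cartan matrix, so $v(\det P)=E_C$. In the simply connected case one may simply take $\lambda_j=\alpha_j^\vee$, and then $P$ is exactly the Cartan matrix.

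\textbf{Step 3: the lattice inclusion.} Let $\mathrm{adj}(P)$ be the adjugate matrix, which has integer entries and satisfies $P\,\mathrm{adj}(P)=\det(P)\,I$. Given $z\in(\fp^D)^m$, write $z=\det(P)\,w$ with $w\in(\fp^{d})^m$; this is possible because $v(\det P)=E_C$. Put $y=\mathrm{adj}(P)\,w\in(\fp^d)^m$. Then $Py=z$. Exponentiating, every element of $(1+\fp^D)^m$ equals $\phi_d(t)$ for the $t\in A_d$ with coordinates $t_j=\exp(y_j)$. Passing to the quotient by $(1+\fp^\ell)^m$ gives the proposition.

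\textbf{Main obstacle.} The only delicate point is Step 1: checking that the congruence subgroup $A_d$ of the Chevalley model corresponds exactly to $(1+\fp^d)^m$ in cocharacter coordinates, and that $\exp$ and $\log$ are defined and mutually inverse at depths $d$ and $D$. I would handle both with the dual-basis argument above together with the standing choice of $d$. If one prefers to avoid the adjugate, an alternative is to use the Smith normal form of $P$: its elementary divisors have total valuation $E_C$, which gives the same inclusion.
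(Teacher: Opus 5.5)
Your proof is correct and follows essentially the same route as the paper. The paper uses $c_C\BZ^m\subseteq\operatorname{im}(f)$, tensors with $\fp^d$ to get $(\fp^D)^m\subseteq(f\otimes\mathrm{id})(X_*(A)\otimes_{\BZ}\fp^d)$, and exponentiates. Your adjugate identity $P\,\mathrm{adj}(P)=\det(P)I$ is an explicit proof of that same lattice inclusion, and you spell out the identification $A_d\cong(\fp^d)^m$ via $\exp$, which the paper leaves implicit.
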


\begin{proof}
Since \(c_C\mathbb Z^m\subseteq \operatorname{im}(f)\), tensoring with \(\mathcal O_F\) gives
\[
(\mathfrak p^{E_C})^m\subseteq \operatorname{im}\bigl(f\otimes_{\mathbb Z}\mathcal O_F\bigr).
\]
Multiplying by \(\mathfrak p^d\) yields
\[
(\mathfrak p^D)^m\subseteq (f\otimes \text{id})\bigl(X_*(A)\otimes_{\mathbb Z}\mathfrak p^d\bigr).
\]
Composing with the \(p\)-adic exponential map, we have \(U_{D,\ell}^{m}\subseteq H_{d,\ell}\).
\end{proof}

Let \(V:=U_{D,\ell}^{m}\) and choose a set of representatives \(\mathscr R\) for the finite quotient \(H_{d,\ell}/V\). Then
\[
H_{d,\ell}=\bigsqcup_{\eta\in\mathscr R}\eta V.
\]

For integers $0<r<\ell$ and $a,b\in\CO$, define
\[
\Kl(a,b;r,\ell)
:=
\sum_{\lambda\in(1+\mathfrak p^r)/(1+\mathfrak p^\ell)}
\psi_F\!\left(
\frac{a\lambda+b\lambda^{-1}}{\varpi^\ell}
\right).
\]

\begin{prop}\label{prop:product-decomp}
For every \(x\in X_d(\dot{w_0}t)\),
\[
S_H(x)=
\sum_{\eta\in\mathscr R}
\prod_{i=1}^m
\Kl\bigl(\varpi^\ell \theta_i(x)\eta_i,\; \varpi^\ell \theta_{\sigma(i)}'(x)\eta_i^{-1}; D,\ell\bigr).
\]
\end{prop}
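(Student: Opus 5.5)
We start from the definition of $S_H(x)$ and use the coset decomposition $H_{d,\ell}=\bigsqcup_{\eta\in\mathscr R}\eta V$ with $V=U_{D,\ell}^m$. This decomposition is available because $\ell>D$, so Proposition \ref{prop:deep-subgroup} applies. Every $\lambda\in H_{d,\ell}$ can then be written uniquely as $\lambda=\eta\nu$ with $\eta\in\mathscr R$ and $\nu=(\nu_1,\dots,\nu_m)\in V$. Since $V$ is a direct product of $m$ copies of $U_{D,\ell}=(1+\fp^D)/(1+\fp^\ell)$, the components $\nu_i$ vary independently. This gives
\[
S_H(x)=\sum_{\eta\in\mathscr R}\sum_{\nu\in V}\psi_F\Bigl(\sum_{i=1}^m\bigl(\theta_i(x)\eta_i\nu_i+\theta'_{\sigma(i)}(x)\eta_i^{-1}\nu_i^{-1}\bigr)\Bigr).
\]

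Next we check that each summand is well defined, so that it depends only on classes modulo $1+\fp^\ell$. We have $\theta_i(x),\theta'_{\sigma(i)}(x)\in\fp^{-\ell}/\fp^d$ and $\eta_i,\nu_i\in\CO^\times$. Changing $\nu_i$ by a factor in $1+\fp^\ell$ changes $\theta_i\eta_i\nu_i$ by an element of $\fp^{-\ell}\cdot\fp^\ell=\CO$, on which $\psi_F$ is trivial. Changing the representative of $\theta_i$ modulo $\fp^d\subset\CO$ is harmless in the same way. The same holds for $\nu_i^{-1}$ and $\theta'$, and for the choice of lift of $\eta_i$, since $\eta\in H_{d,\ell}\subset U_{d,\ell}^m$.

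Because $\psi_F$ is additive, the inner sum over $\nu\in V=\prod_i U_{D,\ell}$ factors as a product over $i$ of
\[
\sum_{\nu_i\in(1+\fp^D)/(1+\fp^\ell)}\psi_F\!\left(\frac{(\varpi^\ell\theta_i(x)\eta_i)\nu_i+(\varpi^\ell\theta'_{\sigma(i)}(x)\eta_i^{-1})\nu_i^{-1}}{\varpi^\ell}\right).
\]
This is exactly $\Kl(\varpi^\ell\theta_i(x)\eta_i,\varpi^\ell\theta'_{\sigma(i)}(x)\eta_i^{-1};D,\ell)$. Note that $a=\varpi^\ell\theta_i(x)\eta_i$ lies in $\CO$, with a well-defined class modulo $\fp^{\ell+d}$, so the definition of $\Kl(a,b;r,\ell)$ applies. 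Summing over $\eta\in\mathscr R$ gives the stated formula.

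The only delicate point is the well-definedness check, in particular that the representative $\eta$ may be taken in $(1+\fp^d)^m$ with the chosen lifts. Everything else is a direct rearrangement of the finite sum, so I expect no real obstacle.
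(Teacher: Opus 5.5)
Your proof is correct and matches the paper's argument. The paper likewise substitutes $\lambda_i=\eta_i\mu_i$ with $\mu_i\in U_{D,\ell}$, using $H_{d,\ell}=\bigsqcup_{\eta\in\mathscr R}\eta V$ from Proposition \ref{prop:deep-subgroup}, and factors the sum over $V=U_{D,\ell}^m$ coordinatewise; your well-definedness checks modulo $\fp^d$ and $1+\fp^\ell$ (note $\Kl(a,b;D,\ell)$ only depends on $a,b$ modulo $\fp^\ell$) are details the paper leaves implicit.
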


\begin{proof}
This follows by substituting \(\lambda_i=\eta_i\mu_i\) with \(\mu_i\in U_{D,\ell}\) into the definition of \(S_H(x)\).
\end{proof}

Combining Propositions~\ref{prop:avg-general} and \ref{prop:product-decomp} yields the generalized Stevens’ trick.

\begin{thm}[Generalized Stevens' trick]\label{thm:general-stevens}
With \(D=d+v(c_C)\) and \(\ell>D\),
\[
\Kl_d(t)=
\frac{1}{|H_{d,\ell}|}
\sum_{x\in X_d(\dot{w_0}t)}
\sum_{\eta\in\mathscr R}
\prod_{i=1}^m
\Kl\bigl(\varpi^\ell \theta_i(x)\eta_i,\; \varpi^\ell \theta_{\sigma(i)}'(x)\eta_i^{-1}; D,\ell\bigr).
\]
\end{thm}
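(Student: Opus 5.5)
The identity follows by direct substitution: insert Proposition~\ref{prop:product-decomp} into the averaging formula of Proposition~\ref{prop:avg-general}. The statement is just these two results combined, so the plan is to check that each applies under the hypotheses $D=d+v(c_C)$ and $\ell>D$.

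First I will invoke Proposition~\ref{prop:avg-general}. It expresses $\Kl_d(t)$ as $|H_{d,\ell}|^{-1}\sum_{x\in X_d(\dot{w_0}t)}S_H(x)$. It needs $\ell$ large enough that $u(x)\in N_d\bs N_{-\ell}$ and $u'(x)\in N_{-\ell}/N_d$. This ensures that $\theta_i,\theta_i'$ are well defined in $\fp^{-\ell}/\fp^d$, and that the torus action on the orbit of $x$ factors through $A_d\to U^m_{d,\ell}$. Increasing $\ell$ keeps these conditions true, so I may assume in addition that $\ell>D$.

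Second, since $\ell>D$, Proposition~\ref{prop:deep-subgroup} gives $V=U_{D,\ell}^m\subseteq H_{d,\ell}$. So the coset decomposition $H_{d,\ell}=\bigsqcup_{\eta\in\mathscr R}\eta V$ makes sense. I then write $\lambda=\eta\mu$ with $\mu=(\mu_i)\in V$. The phase $\psi_F\bigl(\sum_i(\theta_i(x)\eta_i\mu_i+\theta'_{\sigma(i)}(x)\eta_i^{-1}\mu_i^{-1})\bigr)$ factors as a product over $i$. Each factor depends only on $\mu_i\in(1+\fp^D)/(1+\fp^\ell)$. Rewriting $\theta_i(x)\eta_i\mu_i=\varpi^{-\ell}\cdot(\varpi^\ell\theta_i(x)\eta_i)\mu_i$, the sum over each $\mu_i$ is exactly $\Kl(\varpi^\ell\theta_i(x)\eta_i,\varpi^\ell\theta'_{\sigma(i)}(x)\eta_i^{-1};D,\ell)$, with arguments in $\CO$. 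This recovers Proposition~\ref{prop:product-decomp}, and substituting it into the average gives the displayed formula.

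The only delicate point is well-definedness. The summand must be independent of the lifts of $\theta_i(x)\in\fp^{-\ell}/\fp^d$ and of $\lambda_i\in(1+\fp^d)/(1+\fp^\ell)$. Changing $\theta_i$ by an element of $\fp^d\subseteq\CO$ is harmless, because $\lambda_i\in\CO^\times$ and $\psi_F|_\CO=1$. Changing $\lambda_i$ by a factor in $1+\fp^\ell$ changes $\theta_i\lambda_i$ by an element of $\fp^{-\ell}\fp^{\ell}\subseteq\CO$, which is also harmless. The inverse terms are handled the same way. I will also check that the choice of representatives $\eta\in\mathscr R$ is irrelevant, since $V$ is a group and each inner sum runs over all of $V$.
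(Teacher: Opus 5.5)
Your proposal is correct and follows the paper's argument. The paper also obtains the theorem by substituting Proposition~\ref{prop:product-decomp} into the averaging formula of Proposition~\ref{prop:avg-general}. The former comes from writing $\lambda_i=\eta_i\mu_i$ over the cosets $H_{d,\ell}=\bigsqcup_{\eta\in\mathscr R}\eta V$, and the sum factors over $i$ because $V=U_{D,\ell}^m$ is a product. Your checks that the summands do not depend on the lifts of $\theta_i,\theta_i'$ and $\lambda_i$ are right, and they supply details the paper leaves implicit.
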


\begin{remark}\label{rmk_arl}
We record the following estimates for the generalized formula.
\begin{enumerate}
\item Since \(H_{d,\ell}\) contains \(U_{D,\ell}^{m}\), we have $|H_{d,\ell}| \ge |U_{D,\ell}^{m}| = q^{m(\ell-D)}$. Hence,
\[
\frac{1}{|H_{d,\ell}|} \le q^{-m(\ell-D)}.
\]
\item The number of coset representatives satisfies
\[
|\mathscr R| = \frac{|H_{d,\ell}|}{|U_{D,\ell}^{m}|} \le q^{m(D-d)} = q^{m\cdot  v(c_C)}.
\]
\item Each component \(\eta_i\) of \(\eta\in\mathscr R\subseteq H_{d,\ell}\) belongs to \(U_{d,\ell}\), hence is invertible.
\end{enumerate}
\end{remark}

Fix an integer $k\geq 0$ such that $q^{-k}\leq |2|^2$. By \cite[Lemmas 3.2--3.4]{JY99}, for all sufficiently large $r$ and $\ell>r$, we have
\[
|\Kl(a,b;r,\ell)|\leq q^{\bigl(\min\{v(b),\ell-2r-k\}+k+\ell\bigr)/2},\qquad a,b\in\mathcal O_F,
\]
where $v(0)=+\infty$. Applying this estimate with $r=D$ and using Remark \ref{rmk_arl}, we obtain
\[
\begin{aligned}
    |\Kl_d(t)|&\le q^{-m(\ell-D)}\sum_{x\in X_d(\dot{w_0}t)}\sum_{\eta\in\mathscr R}\prod_{i=1}^mq^{\bigl(\min\{v(\varpi^\ell \theta_{\sigma(i)}'(x)\eta_i^{-1}),\ell-2D-k\}+k+\ell\bigr)/2}\\
    &\le q^{m(D+k/2)} \sum_{x\in X_d(\dot{w_0}t)}\sum_{\eta\in\mathscr R}\prod_{i=1}^mq^{\tau(\theta_{\sigma(i)}'(x))/2}\\
    &\le q^{m(D+k/2)}q^{m\cdot v(c_C)}\sum_{x\in X_d(\dot{w_0}t)} q^{\sum\limits_{i=1}^m \tau(\theta_{\sigma(i)}'(x))/2 }.\\
\end{aligned}
\]
Here we used $\min\{v(c),-2D-k\}\leq\tau(c)$ for $c\in F$, and \(C:=q^{m(D+v(c_C)+k/2)}\) depends only on \(F,G,d\). 

Hence,
\[
\begin{aligned}
    |\Kl_d(t)|&\le C\sum_{y\in [-\ell,d]^m}|X_d(\dot{w_0}t)_y| q^{\sum\limits_{i=1}^m y_i/2 }.\\
\end{aligned}
\]


\begin{thebibliography}{XX}
%\bibitem[AV16]{AV16} J. Adams, D. Vogan, {\it Contragredient representations and characterizing the local Langlands correspondence.} Amer. J. Math., 138(3):657–682, 2016.
%\bibitem[AGK15]{AGK15} A. Aizenbud, D. Gourevitch and A. Komarsky, {\it Vanishing of certain equivariant distributions on $p$-adic spherical spaces, and nonvanishing of spherical Bessel functions}. International Mathematics Research Notices, (2015), no.18, 8471-8483.
%\bibitem[AGS15]{AGS15} A. Aizenbud, D. Gourevitch and E. Sayag, {\it $z$-finite distributions on $p$-adic groups}. Advances in Mathematics 285 (2015), 1376-1414.
\bibitem[AB24]{AB24} E. Assing and V. Blomer. {\it The density conjecture for principal congruence subgroups}. Duke Math. J. 173(2024), no. 7, 1359-1426. 
%\bibitem[Ba97]{Ba97} E. M. Baruch, {\it On Bessel distributions of $GL(2)$ over a $p$-adic field.} J. Number Theory 67 (1997), 190-202.
%\bibitem[Ba01]{Ba01} E. M. Baruch, {\it On Bessel distributions for quasi-split groups.} Transactions American Math. Society, 353, no.7, (2001), 2601-2614.
%\bibitem[Ba04]{Ba04} E. M. Baruch, {\it Bessel distribution for $GL(3)$ over the $p$-adic field.} Pac. J. Math. 217 (1) (2004), 11-27.
%\bibitem[BM]{BM} E. M. Baruch and Z. Mao. {\it A Whittaker-Plancherel inversion formula for $\SL_2(\BR)$.} Journal of Functional Analysis, 238 (2006), 221-244.
%\bibitem[BB05]{BB05} A. Bj\"orner and F. Brenti. {\it Combinatorics of Coxeter Groups}, Graduate Texts in Mathematics, 231, Springer-Verlag, New York, 2005.
\bibitem[BFG88]{BFG88} D. Bump, S. Friedberg, D. Goldfeld, {\em Poincaré series and Kloosterman sums for $\SL(3,\BZ)$}, Acta Arith. 50 (1988), 31--89.
\bibitem[BM24]{BM24} V. Blomer and S. Man, {\it Bounds for Kloosterman sums on GL(n)}.  Math. Ann. 390 (2024), 1171-1200.
%\bibitem[Bum13]{Bum13} D. Bump, {\it Lie Groups}, volume 225 of Graduate Texts in Mathematics. Springer, New York, second edition, 2013.
\bibitem[CCL26]{CCL26} L. Cai, J. Chai and Y. Liu, \emph{Bessel distributions and Kloosterman sums}, arXiv:2606.29852v2 (2026).
%\bibitem[Chai19]{Chai19} J. Chai, {\it On Bessel functions over $p$-adic fields.} International Mathematics Research Notices, (2019), no.3, 673-699.
%\bibitem[CPS05]{CPS05} J.W. Cogdell, I.I. Piatetski-Shapiro, and F. Shahidi, {\it Partial Bessel functions for quasi-split groups}, Automorphic Representations, L-functions and Applications: Progress and Prospects. Walter de Gruyter, Berlin, 2005, 95–128.
%\bibitem[CST17]{CST17} J. W. Cogdell, F. Shahidi, and T-L. Tsai. {\it Local Langlands correspondence for $\GL_n$ and the exterior and symmetric square $\varepsilon$-factors}. Duke Math. J., 166(11):2053–2132, 2017.
\bibitem[CLYY26]{CLYY26} L. Cai, H. Liu, M. Yang and W. Yang, {\em Stevens' Approach to Kloosterman Sums on Split Groups}, arXiv:2610.24399v2 (2026).
\bibitem[CLPS91]{CLPS91}
J. Cogdell, J.-S. Li, I.Piatetski-Shapiro and P. Sarnak. {\it Poincare series for $SO(n,1)$}. Acta Math. 167(1991), 229-285.
\bibitem[SGA3]{SGA3} M. Demazure, A. Grothendieck, {\it Sch\'emas en groupes I, II, III}, Lecture Notes in Math 151, 152, 153, Springer-Verlag, New York (1970).
\bibitem[Dab93]{Dab93} R. D\k{a}browski, {\em Kloosterman sums for Chevalley groups}, Transactions of the American Mathematical Society, 337 (1993), no. 2, 757--769.
\bibitem[DR98]{DR98} R. D\k{a}browski, M. Reeder, {\it Kloosterman sets in reductive groups}. Journal of Number Theory 73 (1998), no.2, 228-255.
\bibitem[FH91]{FH91} W. Fulton and J. Harris, \emph{Representation theory: a first course}, Graduate Texts in Mathematics, vol. 129, Springer-Verlag, New York, 1991.
\bibitem[Fri87]{Fri87} S. Friedberg, {\em Poincar\'e Series for GL(n): Fourier Expansion, Kloosterman Sums, and Algebreo-
Geometric Estimates.}, Math. Z. 196 (1987), 165-188.
%\bibitem[Del84]{Del84} P. Deligne. {\it Le “centre” de Bernstein.} In Representations of reductive groups over a local field, Travaux en Cours, pages 1–32. Hermann, Paris, 1984. Edited by P. Deligne.
%\bibitem[FLO12]{FLO12} B. Feigon, E. Lapid and O. Offen, On representations distinguished by unitary groups. Publ. Math. Inst. Hautes Etudes Sci. 115 (2012), 185-323.
%\bibitem[GH24]{GH24} J. Getz and H. Hahn. {\it An Introduction to Automorphic Representations, With a view toward trace formulae}, volume 300 of Graduate Texts in Mathematics. Springer, 2024.
\bibitem[GSW21]{GSW21} D. Goldfeld, E. Stade, and M. Woodbury, {\it An orthogonality relation for $\GL(4,\BR)$ (with an appendix by Bingrong Huang)}. arXiv: 1910.13586. Forum of Mathematics, Sigma (2021), 1-83.
%\bibitem[Guo]{Guo} J. Guo. {\it Spherical characters on certain $p$-adic symmetric spaces.} preprint, available at \url{https://archive.mpim-bonn.mpg.de/id/eprint/525/}.
\bibitem[HKP10]{HKP10}
T. Haines, R. Kottwitz and A. Prasad. {\it Iwahori-Hecke algebras.} J. Ramanujan Math.Soc. 25, no.2 (2010), 113-145.
%\bibitem[Ha]{Ha} J. Hakim. {\it Admissible distributions on $p$-adic symmetric spaces.} J. Reine Angew. Math. 455 (1994), 1-19.
%\bibitem[HC70]{HC70} Harish-Chandra, {\it Harmonic analysis on reductive $p$-adic groups.} Lecture Notes in Mathematics, 162, Springer-Verlag, Berlin, 1970. Notes by G. van Dijk.
%\bibitem[HC99]{HC99} Harish-Chandra, {\it Admissible invariant distributions on reductive $p$-adic groups.} University Lecture Series, vol. 16, American Mathematical Society, Providence, RI. 1999, Preface and notes by Stephen DaBacker and Paul J. Sally, Jr.
\bibitem[H80]{H80} J. E. Humphreys, {\it Linear Algebraic Groups}, Graduate Texts in Mathematics, 021, Springer-Verlag, 1980. 
%\bibitem[Jac16]{Jac16} H. Jacquet, {\it Germs for Kloosterman integrals, a review}. Contemp. Math. Vol. 664 (2016), 182-195.
%\bibitem[JY96]{JY96} H. Jacquet, Y. Ye, {\it Distinguished representation and quadratic base change for $\GL(3)$}. Transactions American Math. Society, 348, no.3, (1996), 913-939.
\bibitem[Jan03]{Jan03} J. C. Jantzen {\it Representations of algebraic groups}. Mathematical Surveys and Monographs, Vol. 107. American Mathematical Society, Providence, RI, second edition.
\bibitem[JY99]{JY99} H. Jacquet, Y. Ye, {\it Germs of Kloosterman integrals for $\GL(3)$}. Transactions American Math. Society, 351, no.3, (1999), 1227-1255.
%\bibitem[Kal22]{Kal22} T. Kaletha, {\it On the local Langlands conjectures for disconnected groups}, 2022, arXiv:2210.02519.
\bibitem[KP23]{KP23} T. Kaletha, G. Prasad, {\it Bruhat-Tits Theory–A New Approach}. New Math. Monogr., 44 (Cambridge University Press, Cambridge, 2023), xxx+718 pp.
%\bibitem[LM13]{LM13} E. Lapid, Z. Mao, {\it Stability of certain oscillatory integrals.} International Mathematics Research Notices, (2013), no.3, 525-547.
\bibitem[Lin26]{Lin26} J. Linn, {\it Bounds for Kloosterman Sums for $\GL_n$}, Adv. in Math., vol.499, 2026.
\bibitem[Man22]{Man22} S. Man, {\it Symplectic Kloosterman sums and Poincare series.} Ramanujan J. 57 (2022), 707-753.
\bibitem[Man24]{Man24} S. Man. Fourier coefficients of Sp(4) Eisenstein series. Acta Arith., 213(3):227–271, 2024.
\bibitem[Mau11]{Mau11}
B. N. Maurizi,
{\it Extending Landau's theorem on Dirichlet series
with non-negative coefficients}.
Missouri J. Math. Sci. 23 (2011), no. 2, 105--122.
\bibitem[Mia24]{Mia24} Xinchen Miao. {\it Bessel functions and Kloosterman integrals on $GL(n)$.} Journal of Functional Analysis. 286 (2024), 110267
%\bibitem[Mil17]{Mil17} James S. Milne, {\it Algebraic groups: The theory of group schemes of finite type over a field}, Cambridge Studies in Advanced Mathematics, vol. 170, Cambridge
University Press, Cambridge, 2017.
%\bibitem[QY]{QY} Z. Qi and C. Yang. {\it A Whittaker-Plancherel inversion formula for $\SL_2(\BC)$.} J. Anal. Math. 145, No. 1 (2021), 235-256. 
\bibitem[Muj24]{Muj24}
C. Mujdei. {\it Kloosterman sums on orthogonal groups}. arXiv:2410.20569(2024).
%\bibitem[RR]{RR} C. Rader and S. Rallis. {\it Spherical characters on $p$-adic symmetric spaces.} Amer. Jour. Math., 118 (1996), 91-178.
\bibitem[Sal32]{Sal32}
H. Salie. {\it \"Uber die Kloostermanschen Summen $S(u,v:q)$}. Math. Z. 34(1932), 91-109.
%\bibitem[Spr98]{Spr98} T. A. Springer, {\it Linear algebraic groups}, 2nd ed., Progress in Mathematics, vol. 9, Birkh\"auser Boston, Inc., Boston, MA, 1998, DOI 10.1007/978-0-8176-4840-4. MR1642713
\bibitem[Ste87]{Ste87} G. Stevens, {\it Poincar\'e series on $\GL(r)$ and Kloostermann sums}. Math. Annalen 277 (1987), 25-51.
%\bibitem[Ste68]{Ste68} R. Steinberg, {\it Lectures on Chevalley Groups}. New Haven, CT: Yale University, 1968. Notes prepared by John Faulkner and Robert Wilson.
%\bibitem[SV17]{SV17} Y. Sakellaridis and A. Venkatesh, {\it Periods and Harmonic Analysis on Spherical Varieties}. Astérisque 396, Société Mathématique de France, Paris, 2017.
\bibitem[Wal03]{Wal03}
J.-L. Waldspurger, {\em La formule de Plancherel pour les groupes $p$-adiques (d'après Harish-Chandra)}, J. Inst. Math. Jussieu 2 (2003), 235--333.
\bibitem[Wei48]{Wei48}
A. Weil. {\it On some exponential sums}. Proc.Natl.Acad.Sci. USA 34(1948), 204-207.



\end{thebibliography}
\end{document}